\theoremstyle{plain}
\newtheorem{definition}{Definition}
\newtheorem{theorem}[definition]{Theorem}
\newtheorem{corollary}[definition]{Corollary}
\newtheorem{lemma}[definition]{Lemma}
\newtheorem{claim}[definition]{Claim}
\newtheorem{noCounterRemark}[definition]{Remark}
\newcommand{\comment}[1]{}
\newcommand{\N}{\mathbb N}
\renewcommand{\subset}{\subseteq}
\newcommand{\emtext}[1]{\text{\em #1}}
\newcommand{\sm}{\setminus}
\newcommand{\ch}{\mathrm{ch}}
\newcommand{\tchi}{\chi''}
\title{List edge-colouring and total colouring in graphs of low treewidth}
\author{Henning Bruhn, Richard Lang, Maya Stein\footnote{Partially supported by Fondation Sciences Math\'ematiques de Paris}}
\date{}
\begin{document}
\maketitle

\begin{abstract}
We prove that the list chromatic index of a graph of maximum degree $\Delta$ and treewidth $\leq \sqrt{2\Delta} -3$ is $\Delta$;  and that the total chromatic number of a graph of maximum degree $\Delta$ and treewidth $\leq \Delta/3 +1$ is $\Delta +1$. This improves results by Meeks and Scott.
\end{abstract}

\section{Introduction}
  \label{sec:introduction}

We treat two common generalisations of graph colouring: list colouring and total colouring.
In analogy to the chromatic number, the \emph{list chromatic number} $\ch(G)$ is the smallest
integer $k$ so that for each choice of $k$ legal colours at every vertex, there 
is a proper colouring that picks a legal colour at every vertex. 
In a similar way, 
the \emph{list chromatic index} $\ch'(G)$ 
generalises the chromatic index.

  
While the list chromatic number and chromatic number may differ widely, the same is not true for the list chromatic index and the chromatic index. No example is known where these invariants differ. Whether  this is a general truth is
  one of the central open
  questions in the field of list colouring:
\newtheorem*{lecc}{List edge-colouring conjecture} 
 \begin{lecc}
  Equality $\ch'(G) = \chi'(G)$ holds for all graphs~$G$.\sloppy
 \end{lecc}
  The conjecture appeared for the first time in print in 1985 in~\cite{boha85}. But,
  according to Alon \cite{Alon93}, Woodall \cite{woodall01} and Jensen and Toft \cite{JeTo95}, the conjecture was suggested independently by Vizing, 
  Albertson, Collins, Erd\H os, Tucker and Gupta in the late seventies.
The most far reaching result is certainly that of Galvin~\cite{journals/jct/Galvin95}, who proved 
that $\ch'(G)=\Delta(G)$, whenever $G$ is a bipartite graph.

While list colouring generalises either vertex or edge colouring, total colouring 
applies to both, vertices and edges. The \emph{total chromatic number} $\tchi(G)$
is the smallest integer $k$ so that there is a vertex colouring of the graph $G$
with at most $k$ colours and at the same time an edge colouring with the same $k$ colours,
so that no edge receives the same colour as any of its endvertices.
 If the list edge-colouring conjecture 
is true an easy argument\footnote{If we colour the vertices of $G$ using the 
 colours $1, \ldots, \Delta(G)+3$, then for each edge there are still $\Delta(G)+1$ colours  available.
 We can colour the edges from those sets if the list edge-colouring conjecture
holds.} shows that $\tchi (G) \leq \Delta (G) +3$ for all graphs $G$.
The next conjecture asserts a little more:
\newtheorem*{tcc}{Total colouring conjecture}
 \begin{tcc}
$\tchi(G) \leq \Delta(G)+2$ holds for all graphs $G$.
 \end{tcc}
  The conjecture has been proposed independently by Behzad~\cite{behz63} and Vizing~\cite{viz-total-76} during the seventies.
  
  It is clear that  $\ch' (G)$ is bounded from below by $\Delta (G)$, the maximum degree of $G$. Also, $\tchi (G)\geq \Delta (G)+1$, since
  a vertex of maximum degree and its incident edges have to receive distinct colours.
 We show that these trivial lower bounds are already sufficient for graphs of low treewidth and high maximum degree. (The treewidth of a graph is 
   a way to measure how much the graph resembles a tree, a proper definition is given in Section~\ref{sec:treewidth}.)
 In particular, our results imply
   the list edge-colouring conjecture as well as the total colouring conjecture
for these classes of graphs. 

 \begin{theorem}
  \label{thm:quadratic-bound}
  Let $G$ be graph of treewidth~$k$ and maximum degree $\Delta(G) \geq {(k+3)^2}/{2}$. Then $\ch'(G) = \Delta(G)$.
 \end{theorem}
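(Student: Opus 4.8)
The plan is to colour the edges round by round along an ordering coming from a tree decomposition, reducing each round to a small system of distinct representatives, and then to pay a cost quadratic in $k$ to cope with the vertices of degree close to $\Delta$, which are the only obstruction. Concretely, fix a tree decomposition of $G$ of width $k$; passing to its chordal completion one gets an ordering $v_1,\dots,v_n$ of $V(G)$ in which every $v_i$ has at most $k$ earlier neighbours $N^-(v_i):=N(v_i)\cap\{v_1,\dots,v_{i-1}\}$, with $N^-(v_i)$ moreover a clique of the completion. Put $N^+(v_i):=N(v_i)\cap\{v_{i+1},\dots,v_n\}$ and $d^+(v_i):=|N^+(v_i)|$. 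I colour the edges in $n$ rounds, round $i$ colouring exactly the edges from $v_i$ to $N^+(v_i)$, so that every edge gets coloured in the round of its lower-indexed endpoint.

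The elementary point is that when round $i$ begins, an edge $v_iv_j$ with $v_j\in N^+(v_i)$ can conflict only with already-coloured edges at $v_i$ — exactly the at most $k$ edges to $N^-(v_i)$ — and with already-coloured edges at $v_j$ — exactly the edges from $v_j$ to $N(v_j)\cap\{v_1,\dots,v_{i-1}\}$, which is a subset of $N^-(v_j)\setminus\{v_i\}$ and so has at most $k-1$ members. Thus round $i$ reduces to properly colouring the $d^+(v_i)$ pairwise adjacent edges at $v_i$, where $v_iv_j$ must avoid an already-fixed set of at most $2k-1$ colours; equivalently, to finding a system of distinct representatives for $d^+(v_i)$ lists each of size at least $\Delta-2k+1$. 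When $d^+(v_i)\le\Delta-2k+1$ this follows at once from Hall's theorem, since then every list is at least as long as the number of lists.

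The real work, and the place where the hypothesis on $\Delta$ is used, is in the rounds with $d^+(v_i)>\Delta-2k+1$, i.e.\ at vertices of degree close to $\Delta$: there a system of distinct representatives need not exist if the reduced lists are concentrated on few colours, and the rounds cannot be treated in isolation, because the colours chosen in round $i$ enter the forbidden sets of every later round at the later neighbours of $v_i$. The plan is to run the rounds under an inductive invariant which, for each still-unprocessed vertex $w$ and each still-uncoloured edge $wx$, keeps a large and suitably spread-out sub-list of colours available at $wx$; this is maintained by reserving, for each vertex and each of its at most $k$ earlier-neighbour constraints, a colour pool of size of order $k$, and by choosing the representatives in a round not greedily but through Galvin's theorem applied to a small auxiliary bipartite graph — the current star together with boundedly many boundary edges and colours — so as to honour the reservations of the later neighbours. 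Because each vertex must keep of order $k$ free colours against each of its at most $k$ constraints, the total reservation is quadratic in $k$, and carrying out the accounting precisely is what gives the bound $\Delta\ge(k+3)^2/2$, i.e.\ $k\le\sqrt{2\Delta}-3$. The hard part will be to design the invariant so that reservations of distinct vertices never clash and the auxiliary bipartite colourings always exist; I expect the chordality of the sets $N^-(v_j)$ to be precisely what keeps the relevant boundary structure small enough for Galvin's theorem to apply. Since $\ch'(G)\ge\Delta(G)$ is immediate, the theorem follows.
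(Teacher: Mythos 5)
Your plan and the paper's proof diverge completely, and more importantly, your plan has a genuine gap precisely where you flag it: the "inductive invariant" with per-vertex colour reservations is never defined, let alone verified. You write that "the hard part will be to design the invariant so that reservations of distinct vertices never clash and the auxiliary bipartite colourings always exist; I expect the chordality \dots\ to be precisely what keeps the relevant boundary structure small enough for Galvin's theorem to apply." That is a statement of hope, not a proof. The round-by-round elimination-order scheme you set up handles a round $i$ with $d^+(v_i)\leq\Delta-2k+1$ by an SDR/Hall argument just fine; but the entire content of the theorem is in the remaining rounds, where $v_i$ has degree close to $\Delta$, and there you have not shown that a consistent reservation policy exists, that the reservations survive the choices made in earlier rounds, or that the auxiliary bipartite graph you would hand to Galvin has the degree/list-size relation Galvin requires. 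Without that, the proof does not go through.

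The paper avoids the round-by-round bookkeeping entirely. It takes an edge-minimal counterexample, uses minimality to force $\deg(v)+\deg(w)\geq\Delta+2$ on every edge, and then invokes a structural lemma (Lemma~\ref{lem:treewidth-k-configurations}, due to Meeks and Scott): under that edge-degree condition in a graph of treewidth $\leq k$, there are sets $U,W$ with $|U|\leq k+1$, $W$ stable with $N(W)\subset U$, every $w\in W$ of degree $\leq k$, and $|W|\geq\Delta+2-2k$. The hypothesis $\Delta\geq(k+3)^2/2$ then yields $2|W|>|U|(|U|-1)$, and a short induction on $|U|$ (Lemma~\ref{lem:quadratic-bound-choosable}, whose base is the Borodin--Kostochka--Woodall refinement of Galvin, Theorem~\ref{thm:Borodin-bipart-max}) produces a nonempty \emph{choosable} subset $C\subset W$. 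One then colours $G-C$ by minimality and extends across the bipartite graph on $C\cup N(C)$, whose lists are automatically long enough because every $w\in C$ has degree $\leq k$ while its $U$-neighbours have had at most $\deta-\deg_H(v)$ colours used up already. This "find a single choosable low-degree cluster and delete it" step replaces your global invariant: no reservations are needed because the extension is a one-shot application of Galvin-type choosability to a bipartite subgraph whose smaller side has bounded degree. If you want to salvage a proof along your lines, you would still need an analogue of the structural lemma to identify where the Galvin step can safely be performed; at that point you have essentially rediscovered the paper's argument, and the elimination ordering is an unnecessary detour.
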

 
 \begin{theorem}
 \label{thm:total-bound}
 Let $G$ be a graph of treewidth  $k\geq 3$ and 
maximum degree $\Delta(G) \geq 3k-3$. Then $\tchi(G) = \Delta(G) +1$.
 \end{theorem}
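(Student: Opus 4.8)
The plan is to argue by induction on $|E(G)|$, for a fixed $k\ge 3$ and a fixed bound $\Delta\ge 3k-3$, that every graph $G$ with treewidth at most $k$ and $\Delta(G)\le\Delta$ satisfies $\tchi(G)\le\Delta+1$ (the lower bound $\tchi(G)\ge\Delta+1$ is clear, and $|E(G)|=0$ is trivial). Two easy reductions open the inductive step. If $G$ has an edge $xy$ with $\deg(x)+\deg(y)\le\Delta$, colour $G-xy$ by induction and extend to $xy$: the colours forbidden on $xy$ are $\phi(x)$, $\phi(y)$ and the at most $\deg(x)+\deg(y)-2$ colours on the remaining edges at $x$ and $y$, hence at most $\Delta$ in all, so one colour is free; thus we may assume $G$ has no such edge. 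And if $|V(G)|\le k+1$ then $\Delta(G)\le k<3k-3$, so $G$ is not a counterexample; hence $|V(G)|\ge k+2$, every tree-decomposition of $G$ of width $k$ has at least two bags, and --- discarding, as long as possible, a leaf whose bag is contained in that of its neighbour --- we reach one with a leaf bag $B_t$ containing a vertex $v$ that occurs in no other bag. Then $N(v)\subseteq B_t\setminus\{v\}$, so $d:=\deg(v)\le k$, and by the first reduction every neighbour $u_i$ of $v$ ($1\le i\le d$) satisfies $\deg(u_i)\ge\Delta+1-d$.

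The core of the argument is then to delete $v$, colour $G-v$ by induction --- call the colouring $\phi$ --- and extend $\phi$ to $v$ and the $d$ edges $e_i:=vu_i$; the $d+1$ elements $v,e_1,\dots,e_d$ form a clique in the total graph. For each $i$ let $A_i$ be the set of colours \emph{missing} at $u_i$ under $\phi$, i.e.\ equal neither to $\phi(u_i)$ nor to $\phi$ of any edge at $u_i$; since $\deg_G(u_i)$ colours appear at $u_i$ in $\phi$ (the $\deg_G(u_i)-1$ edges at $u_i$ in $G-v$, together with $\phi(u_i)$), we get $|A_i|=\Delta+1-\deg_G(u_i)$, so $1\le|A_i|\le d\le k$ by the above. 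Suppose we manage to pick pairwise distinct colours $c(e_i)\in A_i$. The clique $\{v,e_1,\dots,e_d\}$ has at most $k+1$ elements, while $v$ has at least $\Delta+1-d\ge 2k-2\ge k+1$ admissible colours (here $k\ge 3$ is used), so $v$ never violates Hall's condition for this clique; and once the $e_i$ are coloured we can still give $v$ a colour avoiding the at most $2d$ colours used on the $u_i$ and the $e_i$, which is possible since $\Delta\ge 2k$ (a consequence of $\Delta\ge 3k-3$ and $k\ge 3$) yields $\Delta+1-2d\ge 1$. So everything reduces to finding a system of distinct representatives for $A_1,\dots,A_d$; equivalently, to arranging that $\bigl|\bigcup_{i\in I}A_i\bigr|\ge|I|$ for every $I\subseteq\{1,\dots,d\}$.

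This need not hold for the colouring $\phi$ handed to us, and enforcing it by \emph{recolouring} $\phi$ is the \textbf{main obstacle}. Let $I$ be an inclusion-minimal violating set; then the ``deficient'' set $D:=\bigcup_{i\in I}A_i$ satisfies $|D|=|I|-1\le k-1$ and $A_i\subseteq D$ for every $i\in I$, and $|\overline D|=\Delta+1-|D|>2|D|$. Since $\sum_{i\in I}|A_i|\ge|I|>|D|$, some colour $\beta\in D$ lies in at least two of the sets $A_i$, $i\in I$; fix such an index $i^\ast$ and choose a colour $\gamma\in\overline D$ different from all the vertex colours $\phi(u_1),\dots,\phi(u_d)$ (possible, as $|\overline D|\ge\Delta-k+2>d$). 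Then $u_{i^\ast}$ has a $\gamma$-edge but no $\beta$-edge, so it sits at an end of a $\{\gamma,\beta\}$-Kempe chain of $\phi$ in the total graph; swapping the two colours along this chain keeps $\phi$ proper and makes $\gamma$ a colour missing at $u_{i^\ast}$, so $A_{i^\ast}$ now meets $\overline D$. Moreover, unless the chain ends at the other vertex of $I$ whose missing set contains $\beta$, the colour $\beta$ still lies in $\bigcup_{i\in I}A_i$, so the union has genuinely grown. The delicate part --- and where the full force of $\Delta\ge 3k-3$ is spent --- is to schedule these Kempe swaps (using the ample reservoir $\overline D$, and the fact that the only missing sets that matter belong to the few vertices $u_1,\dots,u_d$, all of nearly maximum degree since their degrees exceed $\Delta-k$) so that the process terminates with Hall's condition holding for $A_1,\dots,A_d$, instead of merely relocating the deficiency. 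An alternative is to avoid the recolouring altogether by strengthening the induction hypothesis so that it already delivers, for each low-degree vertex $v$, a colouring of $G-v$ whose sets $A_1,\dots,A_d$ admit a system of distinct representatives.
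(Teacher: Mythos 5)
Your proposal correctly opens with the two standard reductions (remove a low-degree-sum edge; find a vertex $v$ of degree $\le k$ in a leaf bag), and your Hall-condition reformulation of the extension problem is accurate. But the proof has a genuine gap exactly where you flag it: you never show that the Kempe-chain recoloring process terminates with Hall's condition satisfied. This is not a detail that can be left to the reader. A $\{\gamma,\beta\}$-swap at $u_{i^\ast}$ can disturb the missing sets of other neighbours $u_j$ (the chain may pass through several of them, or through the colour $\phi(u_j)$ of a neighbour), and it can create new deficient subsets $I'$; your argument that ``the union has genuinely grown'' only treats one of the cases, and there is no monotone potential function to rule out cycling. The difficulty is real: extending a total colouring across a low-degree vertex by an SDR on the missing-colour sets is not known to work in this generality. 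Your suggested alternative --- a strengthened induction hypothesis delivering colourings that already satisfy Hall for every low-degree vertex --- is also unsubstantiated, and it is far from obvious how to propagate such a hypothesis through vertex deletion.

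The paper sidesteps this obstacle by using a different and richer substructure. Rather than deleting a single low-degree vertex, it applies the Meeks--Scott lemma (Lemma~\ref{lem:treewidth-k-configurations}) to obtain a \emph{high-degree} vertex $x$ together with a large stable set $W$ of low-degree neighbours of $x$ whose other neighbours all lie in a small set $U$. It then removes only a single edge $xw^\ast$ with $w^\ast\in W$, totally colours the rest, uncolours all of $W$ (harmless, since $W$ is stable of degree $\le k$ and can always be recoloured at the end), and carries out a careful counting argument over the colours missing at $w^\ast$, the colour $\alpha$ missing at $x$, and the $\alpha$-matching between $U$ and $W$. The final move is a single, controlled local swap on two edges $ux$ and $uv_\beta$, not an open-ended Kempe chain. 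The key insight you are missing is that the right unit to uncolour and re-extend is an edge incident to a high-degree hub, exploiting the \emph{many} low-degree vertices hanging off $x$ --- not a single low-degree vertex together with all its incident edges.
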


  Our proofs rely on the fact that graphs with
 low treewidth and a high maximum degree contain substructures that are suitable for classical colouring arguments. This method has been used before: 
 Zhou, Nakano and Nishizeki~\cite{DBLP:journals/jal/ZhouNN96} show that $ \chi'(G)=\Delta(G)$
if the graph $G$ has treewidth $\geq 2\Delta(G)$;
Juvan, Mohar and Thomas~\cite{Juvan99listedge-colorings} prove
that the edges of any graph of treewidth~$2$ can be coloured
 from lists of size $\Delta$; and in~\cite{lang-13-1} the latter results are extended to 
graphs of treewidth~$3$ and maximum degree~$\geq 7$.
Finally, this approach has also been employed by  Meeks and Scott~\cite{journals/corr/abs-1110-4077},
who prove that  determining the list chromatic index as well as the 
list total chromatic number is fixed parameter tractable, when parameterised by treewidth. 
As a by-product they obtain that $\ch'(G)=\Delta(G)$
 and $\tchi(G)=\Delta(G)+1$ for all graphs $G$ of treewidth $k$ and maximum degree $\geq(k+2)2^{k+2}$. 
 Our main results give an improvement of their results by making the bound on the maximum degree quadratic/linear instead of exponential.
 
 The rest of the article is organised as follows. In the next section we will prove a lemma that provides a useful substructure, if applied to a graph
 of low treewidth and high maximum degree. This lemma will be used for the proofs of both our main results. The last two sections are independent of each other. In Section~\ref{sec:list-colouring} we give a proof 
 of Theorem~\ref{thm:quadratic-bound} and in Section~\ref{sec:total-colouring} we show Theorem~\ref{thm:total-bound}. We remark that if we replace the bound $\Delta(G) \geq 3k-3$ in Theorem~\ref{thm:total-bound}, with the bound $\Delta(G) \geq 3k-1$, then Theorem~\ref{thm:total-bound} becomes substantially easier to prove: all after Remark~\ref{rem:easy-total-bound} will be unnecessary.

 \section{A structural lemma}
 \label{sec:treewidth}
 
 We follow the notation of Diestel~\cite{Diestel00}.  Let us recall the definition of a tree-decomposition and of treewidth.
 For a graph $G$ a \emph{tree decomposition} $(T,\mathcal{V})$ consists of a tree $T$ and a collection $\mathcal{V} = \{V_t \textit{ : } t \in V(T) \}$ 
 of \emph{bags} $V_t \subset V(G)$ such that
 \begin{itemize}
  \item $V(G) = \bigcup_{t \in V(T)} V_t,$
  \item for each $vw \in E(G)$ there exists a $t \in V(T)$ such that $v$, $w \in V_t$ and
  \item if $v \in V_{t_1} \cap V_{t_2}$ then $v \in V_t$ for all vertices $t$ that lie on the path connecting $t_1$ and $t_2$ in $T.$
 \end{itemize}
 
 A tree decomposition $(T,\mathcal{V})$ of a graph $G$ has \emph{width} $k$ if all bags have  size at most~$k+1$. Note that in this case, if $t$ is a leaf in $T$, then the degree of the vertices in
 $V_t\setminus \bigcup_{t'\neq t}V_{t'}$ is bounded by $k$.
 The \emph{treewidth} of $G$ is the smallest number $k$ for which there exists a width $k$ tree decomposition of $G$. 
 
 Given a tree decomposition $(T,\mathcal{V})$ of $G,$ where $T$ is rooted in some vertex $r \in V (T ),$ we define the \emph{height}~$h(t)$ of
 any vertex $t \in V(T)$ to be the distance from $r$ to $t$. For  $v \in V(G)$ we define~$t_v$ as the (unique) vertex of minimum height in $T$ for which $v \in V_{t_v}$.
 In particular, if $v\in V_r$, then $t_v=r$.

 The proof of the following lemma can be extracted from~\cite{journals/corr/abs-1110-4077}. 
For the sake of completeness we include a proof here.
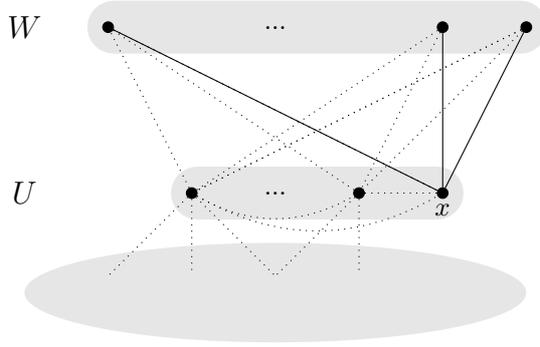
\begin{figure}
\centering
    \begin{tikzpicture}[scale=1.1]
    \tikzstyle{vertex}=[circle,draw,minimum size=4pt,inner sep=0pt,fill]
    \tikzstyle{edge-dot} = [draw,dotted]
    \tikzstyle{edge-lin} = [draw,-]
    \tikzstyle{weight} = [font=\small,draw,fill           = white,
                                  text           = black]
    \foreach \pos/\name in {{(1,0)/v_1},  {(3,0)/v_3},
                            {(0,2)/w_1}, {(4,2)/w_3},{(5,2)/w_5},{(4,0)/u}}
       \node[vertex, align=center] (\name) at \pos {};
    \foreach \pos/\name in {{(0,-1)/x_0},{(2,-1)/x_1},{(5,-1)/x_4},{(3,-1)/x_3},{(1,-1)/x_2}}
       \node[circle,inner sep = 0pt] (\name) at \pos {};
    \node [circle,inner sep = 0, minimum size = 12 pt, below] (test) at (4,0) {$x$};

    \node [circle] (dots) at (2,0) {...};
    \node [circle] (dots2) at (2,2) {...};

    \node[circle] (W) at (-1,2) {\large{$W$}};
    \node[circle] (U) at (-1,0) {\large{$U$}};

    \foreach \source/ \dest in {v_1/w_1,w_1/u/4,w_3/v_3,v_3/w_1, 
                                w_5/v_1,w_5/v_1,w_5/v_3,v_3/u,v_1/x_1,v_1/x_0,v_3/x_3,v_3/x_1,v_1/x_2,v_1/w_3}
       \path[edge-dot] (\source) -- node[] {} (\dest);

    \foreach \source/ \dest in {u/w_1, u/w_3,w_5/u}
       \path[edge-lin] (\source) -- node[] {} (\dest);

     \draw (v_1) edge[out=-30,in=-150,dotted] (u);
     \draw (v_1) edge[out=-30,in=-150,dotted] (v_3);

     \path[draw,opacity=.1,line width=20,line cap=round, color = black] (v_1) --node[] {} (u);
     \path[draw,opacity=.1,line width=20,line cap=round, color = black] (w_1) --node[] {} (w_5);
     \fill [color=black,opacity=0.1] (2,-1.2) ellipse (3 and 0.6);
      
      \end{tikzpicture}
\caption{A useful substructure.}
\label{fig:substructure}
\end{figure}
 
 \begin{lemma}[Meeks and Scott~\cite{journals/corr/abs-1110-4077}]
 \label{lem:treewidth-k-configurations}
 For $\Delta_0,$ $k \in \mathbb{N}$ with $\Delta_0 \geq 2k-1,$ let $G$ be a graph of treewidth at most $k$ 
 and $$\deg(v) + \deg(w) \geq \Delta_0+2$$ 
 for each edge $vw\in E(G)$. Then there are disjoint vertex sets $U, W \subset V(G) $ and a vertex $x \in U,$
 such that
  \begin{enumerate}[\rm (a)]
    \item $W$ is stable with $N(W) \subset U;$\label{a}
   \item $\deg(w) \leq k$ for every $w \in W;$ \label{b}
   \item $x$ is adjacent to each vertex of $W;$ and\label{c}
     \item $|U| \leq k+1$ and $|W| \geq \Delta_0 +2 -2k.$\label{d}
  \end{enumerate}
 \end{lemma}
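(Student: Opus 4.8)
The plan is to reduce the lemma to producing a single good configuration in some width-$\le k$ tree decomposition $(T,\mathcal V)$ of $G$: a bag $V_t$, a vertex $x\in V_t$, and a set $W$ of at least $\Delta_0+2-2k$ vertices, each lying outside $V_t$, each adjacent to $x$, and each with $N(w)\subseteq V_t$. Given such a configuration, put $U:=V_t$; then $(U,W,x)$ already satisfies \eqref{a}--\eqref{d}. Indeed $W\subseteq N(x)$ gives \eqref{c}; $N(W)\subseteq V_t=U$ is immediate, and since $W\cap U=\es$ no vertex of $W$ has a neighbour in $W$, so $W$ is stable, giving \eqref{a}; the width bound gives $|U|\le k+1$, and $|W|\ge\Delta_0+2-2k$ by assumption, so \eqref{d} holds. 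For \eqref{b}, fix $w\in W$: the bags containing $w$ form a subtree of $T$ avoiding $t$, and if $s$ is its node nearest to $t$ in $T$, then every neighbour of $w$ — being in $V_t$ and in some bag together with $w$ — also lies in $V_s$; hence $\{w\}\cup N(w)\subseteq V_s$ and $\deg(w)\le|V_s|-1\le k$.

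To build the configuration, fix a width-$\le k$ tree decomposition, root $T$ at an arbitrary node, and recall the notation $t_v$ and $h(t_v)$. Two observations. First, graphs of treewidth $\le k$ are $k$-degenerate, so, discarding isolated vertices (which changes neither hypothesis nor conclusion) and assuming $G$ has an edge, $G$ has a vertex of degree $\le k$ that has a neighbour of degree $\ge\Delta_0+2-k$. Second, if $u\in N(v)$ with $h(t_u)\le h(t_v)$, then a bag containing the edge $uv$ is a common descendant of $t_u$ and $t_v$, which forces $t_u$ to be an ancestor of or equal to $t_v$; the subtree of bags containing $u$ then passes through $t_v$, so $u\in V_{t_v}$. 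Consequently every vertex $v$ has at most $k$ neighbours $u$ with $h(t_u)\le h(t_v)$, hence at least $\deg(v)-k$ neighbours $u$ with $h(t_u)>h(t_v)$; call these the \emph{low} neighbours of $v$. Now let $x$ be a vertex of degree $\ge\Delta_0+2-k$ with $h(t_x)$ maximal (it exists by the first observation), set $U:=V_{t_x}$, and let $W$ be the set of neighbours of $x$ outside $U$. Then $W\subseteq N(x)$, $W\cap U=\es$, and by the second observation $|W|\ge\deg(x)-k\ge\Delta_0+2-2k$, so it only remains to check $N(w)\subseteq U$ for each $w\in W$.

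This last point is the crux. Suppose $w\in W$ has a neighbour $z\notin U$. Since $w\in N(x)\setminus V_{t_x}$, a bag containing the edge $xw$ lies in the subtree rooted at $t_x$, and as $w\notin V_{t_x}$ the (connected) family of bags containing $w$ lies strictly below $t_x$; applying the same argument to the edge $wz$, the bags containing $z$ lie strictly below $t_x$ as well. So $h(t_w),h(t_z)>h(t_x)$, and maximality of $h(t_x)$ forces $\deg(w),\deg(z)\le\Delta_0+1-k$. The edge $wz$ now gives $\deg(z)\ge\Delta_0+2-\deg(w)\ge k+1$, so $z$ has a low neighbour $z_1$; then $h(t_{z_1})>h(t_z)>h(t_x)$ yields $\deg(z_1)\le\Delta_0+1-k$, while the edge $zz_1$ gives $\deg(z_1)\ge\Delta_0+2-\deg(z)\ge k+1$, so $z_1$ has a low neighbour $z_2$, and so on; this produces vertices $z,z_1,z_2,\dots$ with strictly increasing heights $h(t_{z_i})$, contradicting finiteness of $T$. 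Hence $N(w)\subseteq U$, and $(U,W,x)$ is the desired configuration. I expect this step to be the main obstacle: nothing forces a single bag to contain the neighbourhoods of all the deep neighbours of $x$, and it is precisely the extremal choice of $x$ (deepest among vertices of degree $\ge\Delta_0+2-k$), together with the edge-degree hypothesis, that rules out an ``escaping'' neighbour; the only fiddly point is the separator argument that keeps the relevant bags strictly below $t_x$.
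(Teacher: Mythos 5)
Your proof is correct, and it uses the same skeleton as the paper: root a width-$\le k$ tree decomposition, pick the deepest ``high-degree'' vertex $x$, set $U:=V_{t_x}$ and $W:=N(x)\setminus U$, and verify the four properties. Where you diverge is in the degree threshold used to select $x$ and, as a consequence, in how $N(w)\subseteq U$ is established. The paper chooses $x$ deepest among the set $B$ of vertices of degree $\geq k+1$; this makes every vertex lying in a bag strictly below $t_x$ but outside $U$ belong to $S:=V(G)\setminus B$, which is stable (any edge has at least one endpoint of degree $\geq k+1$). Thus $N(w)\subseteq U$ and $\deg(w)\le k$ drop out simultaneously in one line, and $\deg(x)\ge\Delta_0+2-k$ is then \emph{derived} from $x$ having a neighbour $w\in W$ of degree $\le k$. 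You instead impose $\deg(x)\ge\Delta_0+2-k$ as the selection criterion, which is a strictly smaller set of candidates; your maximality then only bounds deeper vertices by $\Delta_0+1-k$ rather than by $k$, so the direct ``two adjacent low-degree vertices'' contradiction is unavailable and you compensate with the infinite ascending-height chain $z,z_1,z_2,\dots$. That chain argument is sound, but it is extra machinery that the paper's choice of threshold makes unnecessary; your separate bag argument for property (b) is likewise correct but is subsumed in the paper by the observation $X\setminus U\subseteq S$. In short: same decomposition, same extremal vertex idea, same $U$ and $W$; your variant works but the paper's threshold $k+1$ (with the stable set $S$) is the cleaner route, collapsing your crux step and the chain into one sentence.
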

 
 \begin{proof}
  
 By the assumptions of the lemma we have 
  \begin{equation}
 \label{equ:treewidth-k-configurations}
  \deg(v) + \deg(w)  \geq \Delta_0+2  \geq 2k+1.
 \end{equation}
In particular, of any two adjacent vertices, at least one has degree at least $k+1$ (and $G$ has at least one vertex of degree at least $k+1$).  We define ${B} \subset V (G)$ to be the (non-empty) set of vertices of degree at least $k+1$.
 Then ${S} := V(G)\sm B$ is stable. 
 
 Fix a width $k$ tree decomposition $({T},\mathcal{V})$ of $G$ and root the associated tree ${T}$ in an 
 arbitrary vertex $r \in V({T})$. 
 Let ${x} \in B$ such that 
 $
 h(t_{x} ) = \max_{v \in B} h(t_v ).
$
   Define 
 ${T'}$ as the
 subtree of ${T}$ rooted at $t_{x},$ that is, the subgraph of ${T}$ induced by all vertices $t \in V({T})$ where the path from $t$ to the root $r$ contains $t_{x}$.  
 
 Set $U:= V_{t_x}$ and $X := \bigcup_{t \in V({T'})}V_t$. Note that $|U| \leq k+1$.
 We have $B\cap X \subset U,$ since any $v \in (B \cap X)\sm  U$ would have $h(t_v)>h(t_x)$, 
contrary to the choice of ${x}$.
  Consequently 
  \begin{equation}\label{X'}
  X\sm U \subset {S}.
 \end{equation} 
  
By definition of the tree decomposition, 
 no element of $X\sm U$ can appear in a bag indexed
 by a vertex $t \in V({T} - {T'})$. Since ${S}$ is stable this gives
  \begin{equation}\label{neighX'}
  N(X\sm U) \subset U.
 \end{equation}
  By definition of $t_{x},$ also ${x}$ does not appear in any bag $V_t$ of a vertex $t \in {T}- {T'}$. So, $N(x) \subset X$.
 
    Set $W := N({x}) \sm U$. Then $W \subset X\sm U$. So by~\eqref{X'}, we can guarantee~\eqref{b}, and by~\eqref{neighX'}, we have~\eqref{a}. Also, assertion~\eqref{c} and the first part of~\eqref{d} hold.
    
    Using the assumptions of the lemma and~\eqref{b}, we get
  $$\deg({x}) \geq \Delta_0+2 - \deg(w) \geq \Delta_0 + 2 -k.$$ Since $N({x}) \subset U \cup W$ we obtain 
  $$|W| \geq |N({x}) \sm (U \sm \{{x}\})| \geq \Delta_0+2 -2k,$$
  which is as desired for the second part of~\eqref{d}.
 \end{proof}

 \section{List edge-colouring}
 \label{sec:list-colouring}

To define the list edge-colouring of a graph $G$, we define an \emph{assignment of lists} as
a function
 $L:E(G) \rightarrow \mathcal{P}(\mathbb{N})$ that maps the edges of $G$
  to \emph{lists of colours} $L(v)$. 
 A function $\gamma:E(G) \rightarrow \mathbb{N}$ is called an
 \emph{$L$-edge-colouring} of $G$, if $\gamma(e) \in L(e)$ for each $e \in E(G)$ and if no two edges 
with a common endvertex receive the same
 colour. The \emph{list chromatic index} $\ch'(G)$ is the smallest integer $k$ such that for each assignment of lists $L$ to $G$, where all lists have size $k$, there is an $L$-edge-colouring of $G$.
 
For the remainder of this section we suppose all bipartite graphs 
to have bipartition classes $U$ and $W$, unless stated otherwise. 
  
 Let $G$ be a graph with an assignment of lists $L:E(G) \rightarrow \mathcal{P}(\mathbb{N})$ to the edges of $G$. Suppose that for some stable subset $W' \subset V(G)$ we can find an $L$-edge-colouring of $G-W'$. In order
 to extend this to an $L$-edge-colouring of $G$ we have to colour the edges of the bipartite graph $H$  induced by the edges adjacent to~$W'$.
 Note that in the colouring problem we now have for $H$, 
the list of each edge $vw$ with $w \in W'$ has  size of at least $\Delta-\deg_{G-H}(v)\geq\deg_H(v)$. 

This motivates the following notion.
 For a bipartite graph $G$, we call a non-empty subset $C \subset W$ \emph{choosable}, if for any assignment of lists $L$ to the edges of the 
 induced graph $H=G[C \cup N(C)]$ with $|L(vw)| \geq d_H(v)$ for each edge $vw$ with $w\in C$ and $v \in N(C)$, there is an $L$-edge-colouring of $H$. 
  
   \begin{lemma}
\label{lem:quadratic-bound-choosable}
 Let $G$ be a (non-empty) bipartite graph with $2|W| > |U|(|U|-1)$.  Then $W$ contains a choosable subset. 
\end{lemma}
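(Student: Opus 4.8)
The plan is to recast ``choosable'' into a form that Galvin's theorem can dispatch, and then to locate a suitable subset by a peeling argument.

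\textbf{Reductions and the key reformulation.} If some $w\in W$ has $\deg(w)\le 1$, then $C=\{w\}$ already works, since $G[C\cup N(C)]$ is then a single edge or an isolated vertex, which is trivially $L$-edge-colourable for every admissible $L$; so I may assume $\deg(w)\ge 2$ for all $w\in W$. Now I claim it suffices to produce a non-empty $C\subseteq W$ such that $|N(v)\cap C|\ge\deg(w)$ for every $v\in N(C)$ and every neighbour $w\in N(v)\cap C$. Indeed, with $H=G[C\cup N(C)]$, for such a $C$ and any admissible list assignment $L$ we get on each edge $vw$ with $w\in C$ that $|L(vw)|\ge d_H(v)=|N(v)\cap C|\ge\deg(w)=d_H(w)$, hence $|L(vw)|\ge\max\{d_H(v),d_H(w)\}$. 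Since $H$ is bipartite, the list-degree strengthening of Galvin's theorem (a bipartite graph is edge-colourable from lists of size $\max\{d(v),d(w)\}$ on each edge $vw$; derivable from his kernel argument if not cited directly) gives an $L$-edge-colouring of $H$, so $C$ is choosable. Thus the whole statement reduces to an extremal claim about $G$.

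\textbf{Peeling.} To get such a $C$ I would peel: set $C:=W$, and while some $v\in N(C)$ is \emph{deficient} — it has a neighbour $w\in N(v)\cap C$ with $\deg(w)>|N(v)\cap C|$ — delete all of $N(v)\cap C$ from $C$. When no deficient vertex remains, $C$ has exactly the required property, so it only remains to see the process stops with $C\neq\emptyset$, i.e.\ that strictly fewer than $|W|$ vertices get deleted. A processed vertex $v$ leaves $N(C)$ for good and, when processed, satisfies $|N(v)\cap C|<\deg(w)\le|U|$, so it costs at most $|U|-1$ deletions; with at most $|U|$ processed vertices this crude count gives at most $|U|(|U|-1)$ deletions altogether. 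That is exactly twice the bound $\binom{|U|}{2}$ the hypothesis $2|W|>|U|(|U|-1)$ allows — so the naive argument is off by precisely a factor of two.

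\textbf{The main obstacle.} Closing this factor of two is the crux. The route I would pursue is to charge each deleted $w$ to an \emph{unordered} pair of its $U$-neighbours (such a pair exists because $\deg(w)\ge2$), arranged so that no pair is charged twice; this caps the deletions at $\binom{|U|}{2}<|W|$ and finishes the proof. Injectivity of this charging can only fail when a deleted set $N(v)\cap C$ admits no system of distinct ``partner neighbours'', and by Hall's theorem that forces a small set $T\subseteq W$ whose members' neighbourhoods all lie inside one set of size $<|T|$; such a $T$ (suitably pruned, or already as a complete bipartite $K_{m,m'}$ with $m\ge m'$) meets the domination criterion above and can simply be output instead of continuing to peel. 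Making this dichotomy precise, and checking that the pair-charging is genuinely injective away from these degenerate configurations, is the delicate part; everything else is bookkeeping together with the appeal to Galvin's theorem.
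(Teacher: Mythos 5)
Your setup is right — the Borodin--Kostochka--Woodall strengthening of Galvin (Theorem~\ref{thm:Borodin-bipart-max}), the reformulation of ``choosable $C$'' as ``$d_H(v)\geq d_H(w)$ on every edge $vw$ of $H=G[C\cup N(C)]$ with $w\in C$'', and the observation that the naive peel overshoots by exactly a factor of two are all correct. But the step you call ``the crux'' is left as a sketch, and it genuinely does not close. The pair-charging has to be injective \emph{across} peel steps, not merely within one: if $w_1$ is deleted while processing $v_1$ and $w_2$ while processing $v_2\neq v_1$, nothing prevents $\{v_1,u(w_1)\}=\{v_2,u(w_2)\}$ via $u(w_1)=v_2$, $u(w_2)=v_1$, and your peel rule offers no control over this. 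Worse, the Hall fallback is not a fallback: when Hall fails you get a set $T\subseteq W$ with $|N(T)\setminus\{v\}|<|T|$, i.e.\ a set whose neighbourhoods concentrate, but that is an \emph{average}-degree statement about $T$ versus $N(T)$, not the per-edge inequality $d_H(v)\geq d_H(w)$ that Corollary~\ref{cor:Borodin-bipart-max} needs; ``suitably pruned, or already a $K_{m,m'}$'' is doing all the work and is not justified. So as written this is a gap, not bookkeeping.

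The paper sidesteps all of this with an induction on $|U|$ that replaces ``deficient vertex'' by the simpler dichotomy on the minimum $U$-degree. If every $v\in U$ has $\deg(v)\geq|U|$, then for each edge $vw$ we have $\deg(w)\leq|U|\leq\deg(v)$, so $W$ itself is choosable by Corollary~\ref{cor:Borodin-bipart-max}. Otherwise fix $v\in U$ with $\deg(v)\leq|U|-1$, set $U'=U\setminus\{v\}$ and $W'=W\setminus N(v)$; then
\[
2|W'|\;\geq\;2|W|-2(|U|-1)\;>\;|U|(|U|-1)-2(|U|-1)\;=\;|U'|(|U'|-1),
\]
so the hypothesis descends, and a choosable $C\subseteq W'$ in $G'=G[U'\cup W']$ is choosable in $G$ because no vertex of $W'$ is adjacent to $v$. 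In other words, each inductive step shrinks $|U|$ by one and $|W|$ by at most $|U|-1$, which is precisely the ``twice $\binom{|U|}{2}$'' budget used up tightly over all $|U|$ steps; framed this way the factor-of-two problem never appears. If you want to salvage your peeling, replace ``$v$ is deficient'' with ``$|N(v)\cap C|<$ (number of vertices of $U$ still in $N(C)$)'' and track the hypothesis $2|C|>m(m-1)$ where $m=|N(C)|$ — but that is exactly the paper's induction in disguise, and it is cleaner to just state it that way.
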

 
 To prove this we will use the following refined version of Galvin's theorem:

 \begin{theorem}[Borodin, Kostochka and Woodall~\cite{journals/jct/BorodinKW97}]
 \label{thm:Borodin-bipart-max}
  Let $G$ be a bipartite graph with an assignment of lists $L$ to the edges of $G$ such that such that 
  $|L(vw)| \geq \max\{\deg(v),\deg(w)\}$ for each edge $vw \in E(G)$. Then $G$ has an $L$-edge-colouring.
 \end{theorem}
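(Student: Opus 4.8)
I would prove Theorem~\ref{thm:Borodin-bipart-max} by the \emph{kernel method}. Recall that a \emph{kernel} of a digraph $D$ is an independent set of vertices, every non-member of which has an out-neighbour in it, and that $D$ is \emph{kernel-perfect} if every induced subdigraph of $D$ has a kernel. The engine is the standard lemma of Bondy, Boppana and Siegel: if $D$ is a kernel-perfect orientation of a graph $F$ and $L$ assigns to every vertex $x$ a list of size at least $d^+_D(x)+1$, then $F$ is $L$-colourable. (Colour $F$ one colour at a time: choose a colour $c$ occurring in some list, let $F_c$ be the set of vertices whose list contains $c$, colour a kernel of $D[F_c]$ with $c$, then delete those vertices and remove $c$ from every remaining list; each surviving vertex that had $c$ in its list has lost at least one out-neighbour, so the hypothesis of the lemma is restored and one inducts on $|V(F)|$.) I would apply this with $F$ the line graph of $G$ --- whose vertices are the edges of $G$ and in which adjacency means sharing an endvertex --- so that an $L$-edge-colouring of $G$ is precisely an $L$-colouring of $F$.

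It then suffices to find an orientation $D$ of the line graph of $G$ that is kernel-perfect and satisfies $d^+_D(e)\le\max\{\deg(u),\deg(w)\}-1$ for every edge $e=uw$ of $G$; indeed $|L(e)|\ge\max\{\deg(u),\deg(w)\}\ge d^+_D(e)+1$ then finishes the proof. For kernel-perfectness I would use Galvin's construction: fix a proper edge colouring $h$ of $G$ with colours $1,\dots,\Delta$ (König's theorem), regard $h$ as a proper vertex colouring of the line graph, and orient each of its edges --- a pair $e,f$ of edges of $G$ meeting at a vertex $z$ --- towards the endpoint of smaller $h$-value if $z\in U$ and towards the endpoint of larger $h$-value if $z\in W$. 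Galvin's argument shows this is kernel-perfect: any set $F$ of edges of $G$ contains a ``stable'' matching --- obtained by repeatedly selecting, at a suitable vertex, an available edge of extreme colour --- and such a matching is a kernel of the corresponding induced subdigraph. For this orientation the out-degree of $e=uw$ with $h(e)=c$ is (the number of edges at $u$ of colour below $c$) plus (the number of edges at $w$ of colour above $c$), hence at most $(c-1)+(\Delta-c)=\Delta-1$.

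The remaining, and in my view hardest, step is to upgrade this $\Delta-1$ bound to $\max\{\deg(u),\deg(w)\}-1$: the two summands above must be made to interlock rather than pile up. Concretely, it would suffice to choose $h$ so that for every edge $uw$ the colours used at the endpoint of smaller degree that lie below $h(uw)$ are no more numerous than those at the endpoint of larger degree (and symmetrically for colours lying above $h(uw)$); a short calculation then gives exactly $d^+(uw)\le\max\{\deg(u),\deg(w)\}-1$. One natural way to arrange this is to take $h$ to be \emph{balanced}, so that the colours used at each vertex are spread as evenly as possible through $\{1,\dots,\Delta\}$; such colourings exist for bipartite graphs, and one would then check that balancedness yields the required rank inequalities. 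An alternative is to combine the kernel argument with an induction that repeatedly deletes a vertex of minimum degree, whose few incident edges are handled at the end. The delicate point is that kernel-perfectness essentially pins the orientation down to Galvin's ``bipolar'' form, so all the freedom to control out-degrees must be extracted from the choice of $h$; managing this tension is precisely the content of the cited result of Borodin, Kostochka and Woodall.
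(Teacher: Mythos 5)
The paper does not actually prove Theorem~\ref{thm:Borodin-bipart-max}; it is imported verbatim as a black box from Borodin, Kostochka and Woodall~\cite{journals/jct/BorodinKW97}, so there is no in-paper argument to compare against. Your proposal must therefore be judged on its own terms, and as you yourself note in the final paragraph, it is incomplete.

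The framework you set up is sound and is indeed the one BKW use: pass to the line graph, invoke the Bondy--Boppana--Siegel kernel lemma, and use Galvin's stable-matching argument for kernel-perfectness of the ``bipolar'' orientation. The computation $d^+(e)\le(c-1)+(\Delta-c)=\Delta-1$ is correct but only recovers Galvin's global bound. The whole content of the cited theorem is precisely the upgrade to $d^+(uw)\le\max\{\deg(u),\deg(w)\}-1$, and that is the step you leave open. Your reduction of the upgrade to a rank inequality (that the rank of $h(uw)$ among the colours at the small-degree endpoint should not exceed the rank at the large-degree endpoint, up to the degree difference) is the right reformulation, but you then assert without proof both that a suitable ``balanced'' proper edge colouring exists and that balancedness implies the rank inequality. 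Neither is obvious. For the implication, if one takes ``balanced'' to mean that the $i$-th smallest colour at a vertex of degree $d$ lies in the interval $((i-1)\Delta/d,\, i\Delta/d]$, then the ceiling bound $\lceil cd_u/\Delta\rceil-\lceil cd_w/\Delta\rceil\le d_u-d_w$ does deliver what you want; but the existence of a proper colouring with this property at \emph{every} vertex simultaneously is a nontrivial claim you neither state precisely nor justify, and it is far from an immediate consequence of K\"onig's theorem. In effect the proposal relocates the difficulty rather than resolving it: the existence of the correct orientation (equivalently, of the correct vertex-by-vertex orderings $\pi_v$) is exactly what the BKW argument supplies, and you should either prove the existence of the balanced colouring you invoke or give the alternative inductive argument you allude to; as it stands, the key step is missing.
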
 
 
  \begin{corollary}
 \label{cor:Borodin-bipart-max}
  Let  $G$ be a bipartite graph with $\deg(v)\geq \deg(w)$  for each edge $vw \in E(G)$. Then $W$ is choosable.
 \end{corollary}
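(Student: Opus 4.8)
The plan is to show that $W$ itself is choosable (regarded as a subset of $W$), by feeding the given list assignment straight into Theorem~\ref{thm:Borodin-bipart-max}, the Borodin--Kostochka--Woodall strengthening of Galvin's theorem. We may assume $W$ is non-empty and contains a vertex of positive degree, since otherwise there is nothing to prove.

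First I would pin down the graph $H$ occurring in the definition of choosability when $C=W$. As $G$ is bipartite with classes $U$ and $W$, every neighbour of a vertex in $U$ lies in $W$; hence each vertex of $U$ either belongs to $N(W)$ or is isolated in $G$, so $H=G[W\cup N(W)]$ is exactly $G$ with the isolated vertices of $U$ deleted. In particular every edge of $H$ joins a vertex $w\in W$ to a vertex $v\in N(W)\subset U$, and $\deg_H(y)=\deg_G(y)$ for every vertex $y$ that lies on an edge of $H$.

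Now take an arbitrary assignment of lists $L$ to $E(H)$ with $|L(vw)|\geq d_H(v)$ for every edge $vw$ with $w\in W$ and $v\in N(W)$. For each such edge the hypothesis of the corollary gives $\deg_G(v)\geq\deg_G(w)$, and therefore
$$\max\{\deg_H(v),\deg_H(w)\}=\max\{\deg_G(v),\deg_G(w)\}=\deg_G(v)=\deg_H(v)\leq|L(vw)|.$$
So $L$ satisfies the hypothesis of Theorem~\ref{thm:Borodin-bipart-max}, which then supplies an $L$-edge-colouring of $H$. Since $L$ was arbitrary, $W$ is choosable.

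There is no genuine obstacle here; the one thing to be careful about is the bookkeeping of the two degree conditions. The choosability requirement bounds each list only by the degree of its endpoint in $U$, and the role of the hypothesis $\deg(v)\ge\deg(w)$ is precisely to ensure that this same bound dominates the degree of the $W$-endpoint as well, which is what Theorem~\ref{thm:Borodin-bipart-max} asks for. Everything else is immediate from the definitions.
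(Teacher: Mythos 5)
Your argument is correct and is precisely the deduction the paper has in mind: the paper states the corollary without proof, as an immediate consequence of Theorem~\ref{thm:Borodin-bipart-max}, and your careful bookkeeping — identifying $H$ with $G$ minus isolated $U$-vertices, noting $\deg_H = \deg_G$ on $H$, and observing that the hypothesis $\deg(v)\ge\deg(w)$ turns the choosability list condition $|L(vw)|\ge d_H(v)$ into the max-degree condition of the theorem — is exactly the intended reasoning.
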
 

\begin{proof}[Proof of Lemma~\ref{lem:quadratic-bound-choosable}]
 We proceed by induction on $k = |U|$. If $|U| = 1,$ then for any vertex $w\in W$ the set $\{w\}$ is choosable. 
 Given a graph $G$ that satisfies the assumptions of the lemma and for which $|U| = k+1$,
 we can assume that there is a vertex $v \in U$ of degree at most $k$. Otherwise $W$ itself is choosable by Corollary~\ref{cor:Borodin-bipart-max}: indeed, we have $\deg(w)\leq k$ for every $w\in W$ 
as $w$ has all its neighbours in $U$, which is of size~$k$.

 Let $W' := W \sm N(v)$ and $U' := U \sm \{v\}$. 
 As $|U'| = k$ and $$2|W'| = 2|W| - 2|N(v)| > (k+1)k -2k = k(k-1)$$ 
 the graph $G' = G[U' \cup W']$ fulfils the assumptions of the lemma. By the induction assumption $W'$ contains a subset of vertices that is choosable with respect to
 $G'$ and hence also choosable with respect to $G$.
\end{proof}

 \begin{proof}[Proof of Theorem~\ref{thm:quadratic-bound}]
 We prove the following assertion.
 \begin{equation*} 
\begin{minipage}[c]{0.8\textwidth}\em
Let $G$ be a graph of treewidth at most $k$ with an assignment of lists $L$ to the edges of $G,$ such that each list $L(vw)$ has size 
  $\max\{(k+3)^2,\Delta(G)\}$. Then $G$ has an $L$-edge-colouring.
\end{minipage}\ignorespacesafterend 
\end{equation*} 
Set $\Delta := \max(\frac{(k+3)^2}{2},\Delta(G))$ and let $G$ be a counterexample to the claim
  with $|V(G)|+|E(G)|$ minimal. So there are lists $L(vw)$ of size $\Delta$ for each $vw \in E(G)$, such that there is no $L$-edge-colouring of $G$.
  Clearly, $G$ is connected and non-empty. Moreover, for every edge $vw \in E(G)$ we have 
 \begin{equation*}
  \label{equ:edge-degree}
  \deg(v)+\deg(w) \geq \Delta +2.
 \end{equation*} 
  Otherwise choose an $L$-edge-colouring of $G-vw$ by minimality and observe that $L(vw)$ retains at least one available colour, which can be used to colour $vw$.
  By Lemma~\ref{lem:treewidth-k-configurations} (with $\Delta_0 = \Delta$), we know that $G$ has subsets $U, W \subset V(G),$ such that $|U| \leq k+1$ and 
  $$|W| \geq \Delta +2 -2k \geq \frac{(k+3)^2}{2} + 2 - 2k > \frac{(k+1)k}{2}.$$

  Let $H$ be the bipartite graph induced by the edges between $U$ and $W$.
  Then Lemma~\ref{lem:quadratic-bound-choosable}
  provides a subset $C \subset W$ that is choosable with respect to $H$.
  By minimality there is an $L$-edge-colouring $\gamma$ of the graph
  $G - C$. Since $C$ is choosable, we can
  extend $\gamma$ to an $L$-edge-colouring of $G$. This gives the desired contradiction.
 \end{proof}

Theorem~\ref{thm:quadratic-bound} is almost certainly not best possible. In the introduction we 
mentioned the result of Zhou et al~\cite{DBLP:journals/jal/ZhouNN96} 
that $\chi'(G)=\Delta(G)$ whenever $\Delta(G)$
is at least twice the treewidth. If one believes the list edge-colouring conjecture
then this indicates that in Theorem~\ref{thm:quadratic-bound} 
a maximum degree that is linear in $k$ is already sufficient
to guarantee the assertion.

One obvious way to improve the theorem would be to improve 
the bound on the size of $W$ in Lemma~\ref{lem:quadratic-bound-choosable}.
That bound, however, 
 is the best we can obtain by our simple use of Theorem~\ref{thm:Borodin-bipart-max} and its corollary. An illustration is given  in the following example.
  
Consider the family
of bipartite graphs $G_i,$ which is constructed as follows. Let $G_1$ be the complete bipartite graph with two vertices in partition class $U_1$, and one vertex in the other class, $W_1$.
We obtain $G_{i+1}$ from $G_i$ by adding one vertex to $U_i$, and $i$ vertices to $W_i$, thus obtaining  $U_{i+1}$ and  $W_{i+1}$. The vertices in $W_{i+1}\sm W_i$ are made adjacent to all vertices in $U_{i+1}$. 
(Thus, the vertex in $U_{i+1}\sm U_i$ is only adjacent to $W_{i+1}\sm W_i$.)

From the construction it is clear that $|W_i|=\sum_{j=1}^ij$ and $|U_i|=i+1$. So for each $\in\N$, we have
$$2|W_i| = 2\sum_{j =1}^i j = (i+1) i  = |U|(|U|-1).$$

Moreover, we can not apply Corollary~\ref{cor:Borodin-bipart-max} to any induced bipartite subgraph $H =G[C \cup N(C)]$ with $C \subset W_i$ for some $i$. To see this, let $C$ be any subset of $ W_i.$ 
Choose $\ell \leq i$  maximal such that there exists $w \in C \cap W_\ell\sm W_{\ell-1}$. By construction of $G_i,$ the vertex $w$ has degree $|U_{\ell}|=\ell +1$ in $H,$ but any neighbour of $w$ in $U_\ell\sm U_{\ell-1}$
 has  degree   $|W_\ell\sm W_{\ell-1}|=\ell$ in $H$, by the maximality of $\ell$. Thus Corollary~\ref{cor:Borodin-bipart-max} does not apply to $(C, N(C))$. 
 
 However, there is another
version of Galvin's theorem, which can be used to show that for any $i \geq 3,$ the set $W_i$ itself is choosable in $G_i$:
\begin{theorem}[Borodin, Kostochka and Woodall~\cite{journals/jct/BorodinKW97}]
\label{thm:slivnik}
 Let $G$ be a bipartite graph. Then $W$ is choosable if and only if $G$ has an $L$-edge-colouring from the lists $L^*(uw) = \{1, \ldots, \deg(u)\}$ for $u \in U$.
\end{theorem}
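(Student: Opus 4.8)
The plan is to prove this by a direct reduction to the \textbf{edge-list-colouring version of Hall's theorem for bipartite graphs}, exactly as the statement of Theorem~\ref{thm:slivnik} suggests: the nontrivial direction is to show that if $G$ has an $L^*$-edge-colouring for the specific lists $L^*(uw)=\{1,\dots,\deg(u)\}$ on $u\in U$, then $W$ is choosable, i.e.\ every admissible list assignment (with $|L(vw)|\ge d_H(v)$ for $w\in W$, $v\in N(w)$) admits an $L$-edge-colouring. The converse is immediate, since the lists $L^*$ satisfy $|L^*(uw)|=\deg(u)\ge d_H(v)$ and so are themselves an admissible assignment in the sense of choosability; hence if $W$ is choosable, an $L^*$-edge-colouring exists.

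For the forward direction I would set up the standard \emph{stable-matching / kernel} machinery underlying Galvin's theorem. First orient the edges of $G$ so that each $u\in U$ has out-degree exactly $\deg(u)-1$ towards $W$ and in-degree $1$: this is possible precisely because $G$ has an $L^*$-edge-colouring, which one reinterprets as a proper edge-colouring where the colour classes give a linear (or more precisely a \emph{kernel-perfect}) ordering; concretely, an $L^*$-colouring assigns to each edge $uw$ a number in $\{1,\dots,\deg(u)\}$, distinct along $u$ and distinct along $w$, and one orients $uw$ from $u$ to $w$ iff its colour exceeds the colour of the (unique) other edge at $u$ that will be "below" it — the upshot being an orientation $D$ of the line graph $L(G)$ in which every clique (the star at a $U$-vertex, or the star at a $W$-vertex) has a kernel. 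Then apply the Bondy--Boppana--Siegel / Galvin argument: an orientation of a graph in which every induced subgraph on the vertex set of a clique cover has a kernel is $f$-choosable where $f$ counts out-degree plus one; applied to $L(G)$ this yields an $L$-edge-colouring whenever $|L(uw)|\ge d^+(uw)+1$. Finally one checks the arithmetic: with the orientation above, $d^+_{L(G)}(uw)=(\deg(u)-1)+(\text{something}\le \deg(w)-1)$, and the hypothesis $|L(vw)|\ge d_H(v)$ on the $W$-side together with the $U$-side being "maxed out" by $L^*$ shows the required inequality $|L(uw)|\ge d^+(uw)+1$ holds for every edge, giving the colouring.

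The main obstacle — and the step I would spend the most care on — is getting the orientation of $L(G)$ to be kernel-perfect with the out-degrees distributed correctly between the two sides. The symmetric version in Galvin's theorem uses a proper vertex colouring of $G$ itself to orient $L(G)$; here we do not have that, we have an edge-colouring from the lopsided lists $L^*$, so I would need to argue that an $L^*$-edge-colouring of a bipartite graph is \emph{equivalent} to an orientation of $L(G)$ where $U$-stars point "downward" (out-degree $\deg(u)-1$) and every clique has a kernel. This equivalence is morally the content of the Borodin--Kostochka--Woodall refinement, so it is plausible the authors will simply invoke Theorem~\ref{thm:slivnik} as a black box in one direction and prove the other by the short observation above; if a self-contained argument is wanted, the delicate point is verifying kernel-perfectness, which follows because in a bipartite line graph the maximal cliques are exactly the stars, and a star oriented so that it is a transitive tournament (which the $L^*$-colouring numbers provide) always has a kernel — but one must check this interacts correctly across overlapping stars via the $W$-vertices. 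I would also double-check the edge case $i\ge 3$ (equivalently small $|U|$) in the application to $G_i$, since for $|U|\le 2$ the claim can fail, matching the hypothesis in the cited example.
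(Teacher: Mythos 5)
The paper does not prove this theorem at all: it cites it as a known result of Borodin, Kostochka and Woodall (the label \texttt{thm:slivnik} points at the fact that the statement essentially goes back to Slivnik), and uses it as a black box in the discussion of the family $G_i$. So there is no in-paper proof to compare against; the only question is whether your blind proof is sound. You correctly spot the trivial direction (the lists $L^*$ satisfy the choosability hypothesis, so an $L^*$-colouring is forced by choosability of $W$), and the kernel method via Galvin and Bondy--Boppana--Siegel is indeed the right machinery for the hard direction.

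However, your out-degree bookkeeping is wrong, and as written the argument cannot close. You assert $d^+_{L(G)}(uw)=(\deg(u)-1)+(\text{something}\le \deg(w)-1)$, but then a list of size only $\deg(u)$ cannot cover an out-degree that may be as large as $\deg(u)+\deg(w)-2$. The point you are missing is that the $U$-side and $W$-side contributions to $d^+(uw)$ are \emph{coupled} through the colour $j:=c(uw)$ supplied by the $L^*$-colouring. Orient the line graph so that at $u\in U$ one has $e\to f$ iff $c(e)<c(f)$, while at $w\in W$ one has $e\to f$ iff $c(e)>c(f)$. Since the $\deg(u)$ edges at $u$ carry exactly the colours $\{1,\dots,\deg(u)\}$, the $U$-side contributes precisely $\deg(u)-j$ outgoing arcs from $uw$; and since the colours at $w$ are distinct positive integers, at most $j-1$ of them lie below $j$, so the $W$-side contributes at most $j-1$. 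Hence
\[
d^+_{L(G)}(uw)\le(\deg(u)-j)+(j-1)=\deg(u)-1,
\]
and $|L(uw)|\ge\deg(u)=d^+(uw)+1$ is exactly the hypothesis you have. Your description of the orientation itself is also garbled (there is no ``unique other edge at $u$'', and the opening sentence orients $G$ rather than $L(G)$); kernel-perfectness of the stated orientation of $L(G)$ should be invoked as Galvin's lemma rather than re-derived from scratch.
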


Let us show by induction that the graphs $G_i$ are colourable from the lists~$L^*$, for $i\geq 3$.
It is not hard to see that the graph $G_3$ (which equals $K_{3,3}-e$) can be coloured from the lists $L^*$. For the graph
$G_{i+1},$ consider the lists $L^*$ as in the above theorem. By induction, colour the edges of $G_i$ from the smaller lists, and colour the edges adjacent to $U_{i+1}\sm U_i$ with $1, \ldots, i$. The remaining edges lie between $W_{i+1}\sm W_i$ and $U_i$, spanning a complete bipartite $(i+1)$-regular graph $H$. Their lists retain a set $C_{i+1}$ of $i+1$  colours that are unused  so far. So we may  apply Corollary~\ref{cor:Borodin-bipart-max} to see that $W_{i+1}\sm W_i$ 
is choosable in $H$. Thus by Theorem~\ref{thm:slivnik}, we can colour the $E(H)$ with $i+1$ colours. Substitute these colours with the ones from $C_{i+1}$, and we are done.

This suggests that the bound on the size of $|W|$ in Lemma~\ref{lem:quadratic-bound-choosable}
might not be optimal. 
Perhaps Theorem~\ref{thm:slivnik} could be used in general to decrease the bound on the maximum degree.

\section{Total colouring}
\label{sec:total-colouring}
The whole section is devoted to the proof of Theorem~\ref{thm:total-bound}. The same theorem with the slightly stronger bound $\Delta(G)\geq 3k-1$ can be shown with less effort: the reader interested in this variant may read our proof up to
Remark~\ref{rem:easy-total-bound} and skip everything afterwards.

We show the following assertion, which clearly implies Theorem~\ref{thm:total-bound}:  
\[
\emtext{$\chi''(G)\leq \max\{\Delta(G),3k-3,2k\}+1$ for any graph $G$
of treewidth $\leq k$.}
\]
 Suppose this is not true, and let $G$ be an edge-minimal counterexample. 
Put $\Delta:=\max\{\Delta(G),3k-3, 2k\}$. 
 (Thus we assume $G$ cannot be totally coloured with $\Delta+1$
colours, but $G-e$ can, for any edge $e$.)  

\begin{claim}\label{degreesum}
We have $\deg(u)+\deg(v)\geq\Delta+1$ for each edge $uv\in E(G)$.
\end{claim}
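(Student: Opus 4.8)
The plan is to argue by contradiction via the edge-minimality of $G$, exactly as in the proof of Theorem~\ref{thm:quadratic-bound}. Suppose some edge $uv\in E(G)$ has $\deg(u)+\deg(v)\leq\Delta$. By minimality of $G$ as a counterexample, the graph $G-uv$ admits a total colouring $\gamma$ with colour set $\{1,\dots,\Delta+1\}$. I will show that $\gamma$ can be extended to the edge $uv$, contradicting the assumption that $G$ itself is not totally $(\Delta+1)$-colourable; the only non-trivial point is that removing $uv$ from $G$ does not raise the relevant parameter, which is immediate since $\Delta(G-uv)\leq\Delta(G)$ and the treewidth of $G-uv$ is at most that of $G$.

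To extend $\gamma$ to $uv$, I count the colours forbidden for this edge. A colour is unavailable for $uv$ precisely if it is used on $u$, on $v$, on an edge incident with $u$, or on an edge incident with $v$. In $G-uv$ the vertex $u$ is incident with $\deg_G(u)-1$ edges and $v$ with $\deg_G(v)-1$ edges; together with the two colours on $u$ and $v$ themselves, the number of forbidden colours is at most
\[
(\deg_G(u)-1)+(\deg_G(v)-1)+2=\deg_G(u)+\deg_G(v)\leq\Delta.
\]
Since there are $\Delta+1$ colours in total, at least one colour remains available, and we may use it on $uv$. This contradicts the choice of $G$, so no such edge $uv$ exists, which is exactly the claim.

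The proof is entirely routine once the contradiction set-up is in place; there is no genuine obstacle. The one thing to be mildly careful about is the bookkeeping: we must count the colour on $u$ and the colour on $v$ as potentially distinct from each other and from the edge colours (so that the bound $\deg_G(u)+\deg_G(v)$ is not an overcount in a way that breaks the argument — in fact any overcount only helps us), and we must remember that in $G-uv$ both $u$ and $v$ have lost exactly one incident edge. With these small observations the inequality $\deg_G(u)+\deg_G(v)\geq\Delta+1$ follows. I would present this as a short two-line argument in the paper, mirroring the analogous step for the list edge-colouring bound.
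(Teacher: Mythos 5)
Your proposal has a genuine gap: you never address the possibility that the total colouring $\gamma$ of $G-uv$ assigns the \emph{same} colour to $u$ and to $v$. In $G-uv$ these two vertices are not adjacent, so nothing prevents $\gamma(u)=\gamma(v)$. If that happens, then no matter how you colour the edge $uv$, the resulting map is not a total colouring of $G$, because the vertex part of the colouring is already improper on the edge $uv$. Your count of forbidden colours for the edge $uv$ is fine (and, as you note, any overcounting only helps), but counting the forbidden colours for the edge does not resolve the vertex clash.

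The paper's proof handles exactly this point: it first checks whether $\gamma(u)=\gamma(v)$ and, if so, recolours the lower-degree endpoint $v$. The number of colours forbidden for $v$ is at most $\deg(v)$ (its coloured neighbours in $G$, which includes $u$) plus $\deg(v)-1$ (its coloured incident edges in $G-uv$), and since $2\deg(v)-1\leq\deg(u)+\deg(v)-1\leq\Delta-1<\Delta+1$, a fresh colour for $v$ exists. Only after this repair step does the paper carry out the count you give for the edge $uv$. To fix your proof, insert this recolouring step before extending $\gamma$ to the edge.
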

\begin{proof}
Suppose $G$ contains an edge $uv$ for which the degree sum is at most $\Delta$,
where we assume that $\deg(u)\geq \deg(v)$. Let 
 $G-uv$ be totally coloured with at most $\Delta+1$ colours. 

Now, if $u$ and $v$
receive the same colour, we recolour $v$: Note that
 $v$ has $\deg(v)$ coloured neighbours and is incident with $\deg(v)-1$
coloured edges. As $$2\deg(v)-1\leq \deg(u)+\deg(v)-1\leq \Delta-1,$$ there is a colour
among the $\Delta+1$ colours available that can be given to $v$. 

Finally, we observe that 
the edge $uv$ is incident with two coloured vertices and adjacent to 
$\deg(u)+\deg(v)-2$ coloured edges. That means there are at most $\deg(u)+\deg(v)\leq\Delta$
different colours that cannot be chosen for $uv$ -- but we have $\Delta+1$ colours at our disposal.
Thus, $G$ can be totally coloured with $\Delta+1$ colours.
\end{proof}

\medskip
By Claim~\ref{degreesum} we may apply Lemma~\ref{lem:treewidth-k-configurations} with parameters $\Delta_0 = \Delta -1$ and $k$; let $U,W,x$ as obtained by the lemma.
We choose a neighbour $w^*\in W$ of $x$ and totally colour $G-w^*x$  with at most~$\Delta+1$
colours. Further, we uncolour every vertex in $W$. 
Observe that it will not be a problem to colour $W$ once all the rest of $V(G)\cup E(G)$ has been coloured: The vertices in $W$ have degree at most~$k$ each, so there will be at most $2k\leq \Delta$ 
forbidden colours at each $w\in W$. 

We will say that a colour $\gamma$ is \emph{missing at a vertex $v$},
if neither $v$ nor any incident edge is coloured with $\gamma$ (neighbours of $v$, though, are allowed
to have colour~$\gamma$). Let $M(v)$ be the set of all colours missing at $v$. 

As $x$ is incident with at most $\Delta-1$ coloured edges, there is a colour $\alpha$ missing at $x$.
Call an edge coloured $\alpha$ an \emph{$\alpha$-edge}. Note that 
\begin{equation}\label{alphanotmissing}
 \alpha\notin M(w^*).
 \end{equation}
  Indeed,  otherwise we could 
colour $w^*x$ with~$\alpha$, then colour $W$ as described above, and thus get a $(\Delta +1)$-colouring of $G$, which by assumption does not exist. 

Let $F$ be the set of colours on edges between $x$ and $U$
together with the colour of $x$ itself. Note that, since $|U|\leq k+1$, we have that
\begin{equation}\label{sizeF}
|F|\leq k+1.
\end{equation}

Colours that are not in $F$, but missing at $w^*$ are  useful to us, because they could be used 
to colour $xw^*$ (after possibly recolouring some edges in $E(U,W)$). Let us make this more precise:

\begin{claim}\label{alphamatching}
For every colour $\beta\in M(w^*)\sm F$ there is a vertex $v_\beta \in W$ so that $xv_\beta$ has colour $\beta$. Furthermore, there is an $\alpha$-edge incident with $ v_\beta$.
\end{claim}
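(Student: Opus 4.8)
The plan is to prove the two assertions one after the other, each time using that $G$ is a counterexample: whenever we succeed in building a $(\Delta+1)$-total-colouring of $G$ from the partial colouring at hand, we have our contradiction. Recall that in the current partial colouring the only uncoloured edge is $w^*x$ and the only uncoloured vertices are those of $W$, and recall from the construction in the proof of Lemma~\ref{lem:treewidth-k-configurations} that $N(x)\subseteq U\cup W$. Note also that once $w^*x$ has been coloured while the vertices of $W$ are still uncoloured, we can finish: each $w\in W$ has at most $\deg(w)\le k$ coloured incident edges and at most $k$ coloured neighbours, hence at most $2k\le\Delta$ forbidden colours, so one of the $\Delta+1$ available colours remains for it; and since $W$ is stable these choices do not interfere.

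For the first assertion, fix $\beta\in M(w^*)\sm F$. I first claim $\beta\notin M(x)$: otherwise $\beta$ is missing at both endpoints of $w^*x$ (note $w^*$ is uncoloured), so we could colour $w^*x$ with $\beta$ and then colour $W$ as above, contradicting that $G$ is a counterexample. Hence $\beta$ is the colour of $x$ or appears on an edge at $x$; since $\beta\notin F$ it is not the colour of $x$ and not on an edge from $x$ into $U$, so it must lie on an edge $xv_\beta$ with $v_\beta\in N(x)\sm U=W$, and $v_\beta\ne w^*$ because $w^*x$ is uncoloured. This is the vertex $v_\beta$ we want.

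For the second assertion, suppose no edge at $v_\beta$ carries colour $\alpha$. Since $v_\beta\in W$ is uncoloured, this means $\alpha\in M(v_\beta)$. As $\alpha\in M(x)$ as well and $v_\beta$ has no incident $\alpha$-edge, recolouring the edge $xv_\beta$ with $\alpha$ keeps the colouring proper. Now $xv_\beta$ was the unique edge at $x$ of colour $\beta$, and $\beta$ is neither the colour of $x$ nor on an edge from $x$ into $U$ (as $\beta\notin F$); hence after the recolouring $\beta$ is missing at $x$, while it remains missing at $w^*$. So we may colour $w^*x$ with $\beta$ and then colour $W$, once more producing a $(\Delta+1)$-total-colouring of $G$ --- a contradiction. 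Therefore some edge incident with $v_\beta$ is an $\alpha$-edge. The only slightly delicate point in all of this is checking, after the $\beta\to\alpha$ swap on $xv_\beta$, that $\beta$ has genuinely become available at $x$; this rests on $\beta\notin F$ together with the fact that a proper edge-colouring places colour $\beta$ on at most one edge incident with $x$. Beyond this routine bookkeeping I do not foresee a real obstacle.
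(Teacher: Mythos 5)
Your proof is correct and follows essentially the same argument as the paper: for the first part, derive $\beta\notin M(x)$ and use $\beta\notin F$ to locate $xv_\beta$ with $v_\beta\in W$; for the second part, if no $\alpha$-edge meets $v_\beta$, swap $xv_\beta$ to colour $\alpha$ (using $\alpha\in M(x)\cap M(v_\beta)$), colour $xw^*$ with $\beta$, and finish on $W$. The only difference is that you spell out some bookkeeping (why $\beta$ becomes missing at $x$ after the swap) that the paper leaves implicit.
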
 
\begin{proof}
If
there is no $v_\beta\in W$ with $xv_\beta$  
coloured  $\beta$, then, since $\beta\notin F$,
the colour $\beta$ is also missing at $x$, and we may use it for the edge $xw^*$. This proves the first part of the claim.

Next,  
if $\alpha$ is missing at $v_\beta$, we can colour $xv_\beta$ with $\alpha$ and $xw^*$ with $\beta$. Colouring $W$ as described above, this
 gives a $(\Delta +1)$-colouring of $G$, a contradiction. 
Thus, we may assume that $\alpha$ is not missing at $v_\beta$, which, as the vertices of $W$ are uncoloured,
means that there is an $\alpha$-edge at $v_\beta$. 
\end{proof}

Denote by $n_\alpha$ the number of $\alpha$-edges between $U$ and $W$. Using Claim~\ref{alphamatching} and
the fact that there is an $\alpha$-edge at $w^*$ by~\eqref{alphanotmissing}, we see that
\begin{equation}\label{howmanyalphaedges}
n_\alpha\geq | M(w^*)\sm F|+1.
\end{equation}

Let us now estimate how many colours are missing at $w^*$. Of the $\Delta+1$ colours available,
at most $\deg(w^*)-1\leq k-1$ are used for incident edges, and none on $w^*$.

Thus, 
\begin{equation}\label{elvis}
|M(w^*)|\geq\Delta +1 - (\deg(w^*) -1) \geq 2k-1.
\end{equation}

\begin{noCounterRemark}
\label{rem:easy-total-bound}
 Our argumentation so far is enough to prove that any graph of treewidth $k$ and maximum degree $\Delta(G) \geq 3k-1$ satisfies $\tchi(G) = \Delta(G)+1$.\\ Indeed, note that with the assumption $\Delta(G) \geq 3k-1$, we obtain $|M(w^*)|\geq 2k+1$ in~\eqref{elvis}. Plugging this into~\eqref{howmanyalphaedges}, and using~\eqref{sizeF}, we get $n_\alpha \geq k+1$. On the other hand, the $\alpha$-edges form a matching, 
which means there can be at most $k$, as $\alpha$ is missing at $x$.
\end{noCounterRemark}

Let $\rho_x$ be the colour of $x$. 

\begin{claim}\label{FsubsetM}
We have $ F - \rho_x  \subset M(w^*)$. Moreover, $\rho_x\in M(w^*)$ if and only if there is a vertex in $U$ that is coloured $\alpha$.
\end{claim}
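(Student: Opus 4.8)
The plan is to analyse, for each colour $\gamma\in F$, whether $\gamma$ is missing at $w^*$, and to relate ``$\gamma\notin M(w^*)$'' to the existence of a $\gamma$-edge incident with $w^*$. Since $\deg(w^*)\le k$ and $w^*$ is uncoloured, ``$\gamma\notin M(w^*)$'' is equivalent to the existence of an edge at $w^*$ coloured $\gamma$; in particular every colour in $F\setminus M(w^*)$ is realised on a distinct edge at $w^*$, since those edges form a matching at $w^*$. The two assertions of the claim then become statements about which colours of $F$ can appear on edges at $w^*$.

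For the first assertion, $F-\rho_x\subset M(w^*)$: take $\gamma\in F$ with $\gamma\ne\rho_x$, so $\gamma$ is the colour of some edge $xu$ with $u\in U$. Suppose for contradiction $\gamma\notin M(w^*)$, so there is an edge $w^*u'$ coloured $\gamma$ for some $u'\in U$. I would then try to recolour: give $xw^*$ the colour $\gamma$ after first dealing with the conflict at $u$ — but $u$ already has a $\gamma$-edge (namely $xu$), so instead I swap: uncolour $xu$ and $w^*u'$ temporarily, colour $xw^*$ with $\gamma$. Now $xu$ needs a new colour and the edge $w^*u'$ needs a new colour, but $w^*u'$ will be recoloured as part of colouring $W$ at the end. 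For $xu$: it is incident with two coloured vertices ($x$ and $u$) and $\deg(x)+\deg(u)-2$ coloured edges; the key point is that colour $\alpha$ is now free at $x$ (it was missing at $x$, and we only added colour $\gamma$ there). Actually the cleanest route: recolour $xu$ with $\alpha$ if $\alpha$ is missing at $u$; this is exactly where I expect to split into cases according to whether some vertex of $U$ is coloured $\alpha$, foreshadowing the ``Moreover'' part. If $\alpha$ is not available at $u$, there is an $\alpha$-edge at $u$, and I would reroute via that edge, using the fact that $\alpha$-edges between $U$ and $W$ meet $x$'s neighbourhood and that $x$ itself misses $\alpha$.

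For the ``Moreover'' part — $\rho_x\in M(w^*)$ iff some vertex of $U$ is coloured $\alpha$ — the backward direction is the constructive one: if $u_0\in U$ has colour $\alpha$, then I claim $\rho_x\in M(w^*)$, because otherwise there is an edge $w^*u$ coloured $\rho_x$; then recolour $x$ with $\alpha$? No — $u_0$ adjacent to $x$ has colour $\alpha$, so that fails. Instead: colour $xw^*$ with $\rho_x$ after uncolouring $x$; then $x$ is an uncoloured vertex of degree $\le\Delta$ with $\deg(x)$ coloured neighbours and $\deg(x)$ coloured incident edges (now including $xw^*$), giving at most $2\deg(x)$ forbidden colours, which is not obviously $\le\Delta$. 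So I would instead argue via $\alpha$: since $\alpha$ is missing at $x$ and $u_0$ carries $\alpha$, colour $xw^*$ with $\rho_x$, recolour $x$ — the colour $\alpha$ is available at $x$ unless an edge at $x$ or a neighbour has it; a neighbour ($u_0$) does have $\alpha$, but neighbours are allowed to share, so $\alpha\in M(x)$ still after the edge $xw^*$ is coloured $\rho_x\ne\alpha$, hence recolour $x$ with $\alpha$, then finish $W$. For the forward direction, assume no vertex of $U$ is coloured $\alpha$ and show $\rho_x\notin M(w^*)$, i.e.\ produce a $\rho_x$-edge at $w^*$: here I expect to run the recolouring of the first part in reverse and reach a contradiction with edge-minimality if $\rho_x$ were missing at $w^*$, since then $\rho_x$ could be freed and used.

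The main obstacle I anticipate is the bookkeeping in the forward recolouring arguments: each time I uncolour $xw^*$, recolour some edge $xu$ or vertex $x$, and then invoke ``colour $W$ at the end'', I must make sure no conflict is pushed onto a still-coloured part of $G$ and that the degree count at the recoloured object genuinely leaves a free colour among the $\Delta+1$ available — the inequalities $\deg(x)+\deg(u)\le\Delta$-type bounds from Claim~\ref{degreesum} and $\deg(w)\le k$ for $w\in W$ will be used repeatedly, and the split on whether $\alpha$ appears on a vertex of $U$ is exactly what makes the edge $xu$ recolourable. Getting that case split to line up cleanly with the ``if and only if'' is the crux.
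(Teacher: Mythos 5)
Your approach diverges fundamentally from the paper's. The paper proves the whole claim by a single global counting argument: it bounds $n_\alpha$ from above by $k-u_\alpha$ (an $\alpha$-edge matching avoiding $x$ and the $\alpha$-coloured vertices of $U$) and from below by $|M(w^*)\sm F|+1$ (via Claim~\ref{alphamatching} and~\eqref{alphanotmissing}); combined with $|M(w^*)|-|F|\geq k-2$ this yields $|F\sm M(w^*)|+u_\alpha\leq 1$, from which both assertions drop out after one short recolouring (only in the subcase $u_\alpha=0$, $\rho_x\in M(w^*)$). Your proposal instead tries to handle each $\gamma\in F$ by a local edge swap, and this is where it breaks down.

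Two steps of your recolouring do not close. First, you uncolour $w^*u'$ and say it ``will be recoloured as part of colouring $W$ at the end'' --- but the final step of the set-up only colours the \emph{vertices} of $W$, never the edges between $U$ and $W$; those stay fixed. There is no mechanism that gives $w^*u'$ a new colour, and no degree bound that would supply one: $u'\in U$ may have degree close to $\Delta$, and by Claim~\ref{degreesum} $\deg(w^*)+\deg(u')\geq\Delta+1$, so the naive count of forbidden colours at $w^*u'$ (and likewise at $xu$) does not leave a spare. Second, in the ``Moreover'' direction you colour $xw^*$ with $\rho_x$ while \emph{assuming} $\rho_x\notin M(w^*)$, i.e.\ assuming there is already a $\rho_x$-edge at $w^*$; this is an immediate conflict at $w^*$. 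You also state that ``neighbours are allowed to share'' a colour when recolouring $x$ with $\alpha$: that phrase occurs only in the \emph{definition} of ``missing'' and is not a licence to assign $\alpha$ to $x$ when a neighbour already carries $\alpha$ --- in a total colouring adjacent vertices must get distinct colours, which is exactly why the paper needs $u_\alpha=0$ before recolouring $x$ with $\alpha$. Your sketch of the forward direction (if $u_\alpha=0$ and $\rho_x\in M(w^*)$, free $\rho_x$ and use it for $xw^*$) is in spirit the one correct recolouring of the paper, but without the counting inequality $|F\sm M(w^*)|+u_\alpha\leq1$ you have no handle on the first assertion $F-\rho_x\subset M(w^*)$ at all, and that inequality is the heart of the claim.
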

\begin{proof}
Let $u_\alpha$ be the number of vertices of $U$ coloured $\alpha$. 
No vertex in $U$ may be incident with two of the $\alpha$-edges counted by $n_\alpha$. 
As, moreover,
$\alpha$ is missing at $x$, we get that
\begin{equation}\label{aaa}
n_\alpha \leq |U|-u_\alpha -1 \leq k-u_\alpha .
\end{equation}
On the other hand,
\begin{equation}\label{bbb}
 |M(w^*)\sm F|- |F\sm M(w^*)|=|M(w^*)|-|F|\overset{\eqref{sizeF}, \eqref{elvis}}\geq   k-2.
\end{equation}
Putting~\eqref{howmanyalphaedges}, \eqref{aaa} and \eqref{bbb} together, we get
\[
k- u_\alpha \geq  |F\sm M(w^*)|+ k-1.
\]
In other words, $$ |F\sm M(w^*)|+u_\alpha \leq 1.$$

In the case $u_\alpha >0$, this proves the claim. So suppose $u_\alpha =0$.
If $\rho_x \in M(w^*)$, we can 
recolour $x$ with $\alpha$, colour the edge
$xw^*$ with $\rho_x$ and colour $W$ as above. Therefore, $\rho_x \notin M(w^*)$, and the claim follows.
 \end{proof}

\begin{claim}\label{sizeFdegw*}
We have $ |F |=k+1$ and $\deg(w^*)=k$. 
\end{claim}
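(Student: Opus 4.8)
The plan is to prove $|F|=k+1$ and $\deg(w^*)=k$ jointly, since by~\eqref{sizeF} and~\eqref{elvis} both quantities are already pinned to within one of these target values, and the arguments that close the gaps are essentially the same chain of counting inequalities used in Claim~\ref{FsubsetM}, only now squeezed tighter.

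First I would re-examine the inequality chain. From~\eqref{howmanyalphaedges} we have $n_\alpha\geq |M(w^*)\sm F|+1$, and from~\eqref{aaa} we have $n_\alpha\leq k-u_\alpha$. Combining with Claim~\ref{FsubsetM}, which gives $|F\sm M(w^*)|+u_\alpha\leq 1$, and with $|M(w^*)\sm F|=|M(w^*)|-|F|+|F\sm M(w^*)|$, I get
\[
k-u_\alpha\;\geq\; |M(w^*)|-|F|+|F\sm M(w^*)|+1.
\]
Plugging in $|F\sm M(w^*)|\geq 0$, $u_\alpha\leq 1-|F\sm M(w^*)|\leq 1$, $|F|\leq k+1$ from~\eqref{sizeF}, and $|M(w^*)|\geq 2k-1$ from~\eqref{elvis}, the inequality becomes $k-1\geq (2k-1)-(k+1)+0+1=k-1$, so equality must hold throughout. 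In particular $|F|=k+1$, $|M(w^*)|=2k-1$, $u_\alpha=1$, and $|F\sm M(w^*)|=0$. Now $|M(w^*)|=2k-1$ forces equality in~\eqref{elvis}, which means $\deg(w^*)-1=k-1$, i.e. $\deg(w^*)=k$. That would complete the proof.

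The main obstacle is making sure the equality propagation is airtight: in particular I must be careful that $|M(w^*)|\geq 2k-1$ in~\eqref{elvis} uses only $\Delta\geq 2k$ and $\deg(w^*)\leq k$, both of which hold, and that $|F|\le k+1$ genuinely comes from $|U|\le k+1$ with no hidden slack. A subtle point is that~\eqref{elvis} was derived as $|M(w^*)|\geq \Delta+1-(\deg(w^*)-1)$ and then bounded below using $\deg(w^*)\le k$ and $\Delta\ge 2k$; for the equality argument I only need the weaker consequence $|M(w^*)|\ge 2k-1$, and then reading equality backwards I recover $\deg(w^*)=k$ (and also, incidentally, $\Delta=2k$, though I will not need that). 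I would also double-check that Claim~\ref{FsubsetM} really yields $u_\alpha+|F\sm M(w^*)|\le 1$ rather than some weaker bound — it does, that is exactly the displayed conclusion "$|F\sm M(w^*)|+u_\alpha\le 1$" in its proof. Provided these bookkeeping steps are handled cleanly, the claim follows immediately by forcing equality in a tight linear chain.
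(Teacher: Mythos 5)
Your chain does not close: the substitution $u_\alpha\le 1$ gives $k-u_\alpha\ge k-1$, which is the \emph{wrong direction} for a sandwich. After combining \eqref{howmanyalphaedges}, \eqref{aaa} and the identity $|M(w^*)\sm F|=|M(w^*)|-|F|+|F\sm M(w^*)|$, you correctly get
\[
k-u_\alpha\;\ge\; |M(w^*)|-|F|+|F\sm M(w^*)|+1,
\]
and plugging in $|M(w^*)|\ge 2k-1$, $|F|\le k+1$, $|F\sm M(w^*)|\ge 0$ makes the right-hand side $\ge k-1$. But the left-hand side is only bounded \emph{below} by $k-1$ (via $u_\alpha\le 1$), so all you have recovered is $k\ge k-u_\alpha\ge k-1$, which is vacuous: nothing forces the intermediate inequalities to be tight. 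The case $u_\alpha=0$, $|F\sm M(w^*)|=1$, $|M(w^*)|-|F|=k-2$ satisfies every bound you invoked with slack on both ends. The missing ingredient is the \emph{lower} bound $u_\alpha+|F\sm M(w^*)|\ge 1$: since $\rho_x\in F$, if $u_\alpha=0$ then Claim~\ref{FsubsetM} forces $\rho_x\notin M(w^*)$, hence $\rho_x\in F\sm M(w^*)$ and $|F\sm M(w^*)|\ge 1$. Together with the $\le 1$ bound from the proof of Claim~\ref{FsubsetM}, this gives $u_\alpha+|F\sm M(w^*)|=1$ exactly, and \emph{that} is what lets you rearrange to $|M(w^*)|-|F|\le k-2$, which paired with $|M(w^*)|-|F|\ge k-2$ closes the sandwich. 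This is precisely the content the paper uses: it supposes the claim fails, tightens \eqref{bbb} to $\ge k-1$, deduces $u_\alpha=0$ and $|F\sm M(w^*)|=0$ (hence $\rho_x\in M(w^*)$), and reads off a contradiction to Claim~\ref{FsubsetM}'s dichotomy. You need this same use of the \emph{iff} part of Claim~\ref{FsubsetM}, not just the inequality $u_\alpha+|F\sm M(w^*)|\le 1$.

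A secondary slip: the bound $|M(w^*)|\ge 2k-1$ in \eqref{elvis} does not follow from $\Delta\ge 2k$ and $\deg(w^*)\le k$ alone; those give only $|M(w^*)|\ge k+2$, which is weaker than $2k-1$ for $k>3$. What \eqref{elvis} actually uses is $\Delta\ge 3k-3$ together with $\deg(w^*)\le k$. This does not affect the structure of your argument, but it does matter if you intend to ``read equality backwards'' from \eqref{elvis}: tightness there pins down both $\deg(w^*)=k$ and $\Delta=3k-3$, and you should invoke the correct lower bound on $\Delta$.
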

\begin{proof}
Suppose either of the two inequalities does not hold. Then the estimate in~\eqref{bbb} 
is never tight, and we deduce
\begin{equation*}
 |M(w^*)\sm F|- |F\sm M(w^*)|\geq   k-1.
\end{equation*}
This leads to $$ |F\sm M(w^*)|+u_\alpha \leq 0.$$
Thus both $u_\alpha =0$ and $\rho_x \in M(w^*)$, contradicting Claim~\ref{FsubsetM}.
 \end{proof}

We next investigate which colours are missing at the vertices $v_\beta$ from Claim~\ref{alphamatching}.
\begin{claim}\label{tinhattrio}
$M(v_\beta) \subset  M(w^*)$ for every colour $\beta\in M(w^*)\sm F$.
\end{claim}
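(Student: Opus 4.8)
The plan is to argue by contradiction against the edge-minimality of $G$: assuming some colour $\gamma$ is missing at $v_\beta$ but not at $w^*$, I would complete the partial total $(\Delta+1)$-colouring of $G$. Fix such a $\gamma$. Since $w^*$ is uncoloured, there is an edge $w^*z$ with $z\in N(w^*)\subset U$, $z\neq x$, coloured $\gamma$; recall also that there is an $\alpha$-edge $v_\beta y$ at $v_\beta$ (Claim~\ref{alphamatching}), an $\alpha$-edge at $w^*$ (by~\eqref{alphanotmissing}), and that $\alpha\in M(x)$. Throughout, once the edge $xw^*$ has received a colour I can finish: every $w\in W$ has degree at most $k$, hence at most $2k\le\Delta$ forbidden colours, so the uncoloured vertices of $W$ can then be coloured greedily.

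First I would dispose of three easy cases. If $\gamma\in F\setminus\{\rho_x\}$, then $\gamma\in M(w^*)$ by Claim~\ref{FsubsetM}, a contradiction. If $\gamma$ is missing at $x$ (in particular if $\gamma\notin F$ lies on no edge at $x$), recolour $xv_\beta$ from $\beta$ to $\gamma$ — legal since $\gamma$ is missing at both $x$ and $v_\beta$ — whereupon $\beta$ becomes missing at $x$ (as $\beta\notin F$), so colour $xw^*$ with $\beta$ and finish. If $\gamma=\rho_x$, then $\rho_x\notin M(w^*)$ forces, via Claim~\ref{FsubsetM}, that no vertex of $U$ is coloured $\alpha$; since every neighbour of $x$ lies in $U\cup W$ and the vertices of $W$ are uncoloured, $\alpha$ is then legal at the vertex $x$, so recolour $x$ with $\alpha$, recolour $xv_\beta$ with $\rho_x$, colour $xw^*$ with $\beta$, and finish.

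It remains to treat the case $\gamma\notin F$, $\gamma\neq\rho_x$, yet $\gamma$ appears at $x$ — necessarily on a single edge $xw'$ with $w'\in W\setminus\{w^*\}$. Here I would first observe that if $\alpha$ is missing at $w'$ one recolours $xw'$ with $\alpha$, which moves $\gamma$ off $x$ and thereby reduces to the preceding case; so one may assume there is an $\alpha$-edge at $w'$. I then turn to a Kempe-chain argument with the colours $\alpha$ and $\gamma$: in the subgraph $H$ formed by the $\alpha$- and $\gamma$-edges, both $x$ and $v_\beta$ have degree $1$, and the idea is to swap the two colours along the path of $H$ emanating from $v_\beta$ (via its $\alpha$-edge), so as to make $\alpha$ missing at $v_\beta$, then recolour $xv_\beta$ with $\alpha$, freeing $\beta$ at $x$, and colour $xw^*$ with $\beta$. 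The swap is legal at interior vertices of the path (each meets both an $\alpha$- and a $\gamma$-edge, hence is coloured with neither), and it creates no $\beta$-edge, so $\beta$ stays missing at $w^*$; if the path misses $x$ this completes the colouring.

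The delicate point — which I expect to be the main obstacle — is that the path of $H$ out of $v_\beta$ may end at $x$ (through the $\gamma$-edge $xw'$), in which case swapping the whole path re-introduces $\alpha$ at $x$; and one must in any case control where the degree-$2$ vertex $w^*$ of $H$ sits relative to this path, and what colour the far endpoint of the swapped path carries, to avoid a clash at that endpoint. Resolving these sub-cases — by swapping only an appropriate portion of the path, or by re-routing through the $\alpha$-edge at $w^*$ and recolouring the $\gamma$-edge $w^*z$ — is where the real work lies; the structural facts already in hand ($|U|=k+1$ and $\deg(w^*)=k$ by Claim~\ref{sizeFdegw*}, and the $\alpha$-edges forming a matching avoiding $x$) should be exactly what makes the bookkeeping close.
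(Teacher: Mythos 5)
Your three preliminary cases (\(\gamma\in F\setminus\{\rho_x\}\), \(\gamma\) missing at \(x\), \(\gamma=\rho_x\)) are correct and essentially reproduce the paper's preliminary reductions. The genuine gap is exactly where you flag it: the remaining case, \(\gamma\notin F\), \(\gamma\neq\rho_x\), with \(\gamma\) on an edge \(xw'\), is the heart of the claim, and your Kempe-chain plan is left open. The obstructions you name are real and not merely bookkeeping: in a \emph{total} colouring, swapping \(\alpha\) and \(\gamma\) along a chain can clash with the vertex colour at the chain's far endpoint; the chain may wander outside \(U\cup W\) (the \(\alpha\)- and \(\gamma\)-edges are not confined to \(E(U,W)\)), so you have no control over what that endpoint looks like; and if the chain reaches \(x\) via \(xw'\), then \(\alpha\) is no longer missing at \(x\) and the intended recolouring of \(xv_\beta\) with \(\alpha\) is blocked. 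You explicitly write that ``the real work lies'' in resolving these sub-cases and do not carry it out, so the proof is incomplete.

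The paper avoids the chain altogether with a counting argument. Since \(\gamma\notin M(w^*)\), the vertex \(w'\) is distinct from \(w^*\) and from every \(v_{\beta'}\) with \(\beta'\in M(w^*)\setminus F\), and you have already shown that \(w'\) carries an \(\alpha\)-edge. Together with Claim~\ref{alphamatching} and~\eqref{alphanotmissing} this gives at least \(|M(w^*)\setminus F|+2\) distinct vertices of \(W\) that are matched to \(U\) by \(\alpha\)-edges, whence (writing \(\delta=1\) if \(\rho_x\in M(w^*)\) and \(\delta=0\) otherwise, and using \(F\setminus\{\rho_x\}\subset M(w^*)\) from Claim~\ref{FsubsetM} together with~\eqref{sizeF} and~\eqref{elvis}) one gets \(n_\alpha+\delta\geq k+1\). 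On the other hand the \(\alpha\)-edges form a matching whose \(U\)-endpoints avoid \(x\) and, by the second part of Claim~\ref{FsubsetM}, avoid the \(\alpha\)-coloured vertex of \(U\) whenever \(\delta=1\), so \(n_\alpha+\delta\leq |U|-1\leq k\), a contradiction. If you want to salvage your route, you would need to replace the open Kempe analysis by this count (or by a precise analysis of the chain endpoints that handles vertices outside \(U\cup W\)); as written, the proposal stops short of a proof.
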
 
\begin{proof}
First, note that $\rho_x \notin M(v_\beta)\sm M(w^*)$. Indeed, otherwise $\rho_x \notin M(w^*)$ and
therefore, by Claim~\ref{FsubsetM}, no vertex in $U$ is coloured with $\alpha$. Thus we can recolour $xv_\beta$ with $\rho_x,$ colour $xw^*$ with $\beta,$ recolour $x$ with $\alpha$
and finish by colouring $W$.

 Now, for contradiction suppose there is a colour $\beta^*\in M(v_\beta)\sm M(w^*).$ By the previous paragraph, $\beta^* \neq \rho_x$. Hence, by Claim~\ref{FsubsetM}, $\beta^* \notin F$.
 
 Then, there must 
be a vertex $y\in W$ so that $xy$ has colour $\beta^*$, as
otherwise we can colour the edge $xw^*$ with colour $\beta$, and the edge $xv_\beta$ with colour $\beta^*$, colour~$W$, and are done.  
Moreover, $y$ 
is incident with an  $\alpha$-edge. Indeed, otherwise we can colour the edge $xy$ with $\alpha$, the edge $xw^*$ with $\beta$, and the edge $xv_\beta$ with $\beta^*$, colour~$W$, and are done.  

Setting $\delta=1$ if $\rho_x\in M(w^*)$ and $\delta=0$ otherwise, we deduce from 
Claim~\ref{alphamatching} and~\eqref{alphanotmissing} that
\begin{equation*}
n_\alpha + \delta\geq | M(w^*)\sm (F\sm\{\rho_x\})|+2\overset{\eqref{sizeF}, \eqref{elvis}}\geq   k+1 .
\end{equation*}

On the other hand, using the second part of Claim~\ref{FsubsetM}, we see that
$$ n_\alpha + \delta\leq |U|-1\leq k,$$
a contradiction.
\end{proof}

Fix $\beta\in M(w^*)\sm F$. Note that $|M(v_\beta)| \geq |M(w^*)|-1$, as $\deg(v_\beta)\leq k=\deg(w^*)$ by Claim~\ref{sizeFdegw*}. So, by Claim~\ref{tinhattrio}, we get that $M(v_\beta) = M(w^*) \sm\{ \beta\}$. In particular,  $F-\rho_x\subseteq M(v_\beta)$.

By Claim~\ref{alphamatching}, there is a vertex $u\in U$ be so that $v_\beta u$ has colour $\alpha$. The edge $ux$
exists as  $|F|=k+1$  by Claim~\ref{sizeFdegw*}. The colour $\rho_{ux}$ of $ux$ is in $F$, and thus missing at $v_\beta$. So we may
swap  colours on $ux$ and $uv_\beta$. This yields again a
total colouring of $(E-xw^*)\cup V\sm W$. In the new colouring $\rho_{ux}$ is missing at $x$.
As $\rho_{ux}$ is also missing at $w^*$ we may use it to colour $xw^*$. 
Finally we fix the colours of the vertices in $W$ in order to obtain 
a total colouring of $G$.

\bibliographystyle{amsplain}
\bibliography{mylib}

\small
\vfill
\noindent
Version 11 Nov 2013
\bigbreak

\noindent
\begin{tabular}{cc}
\begin{minipage}[t]{0.5\linewidth}
Henning Bruhn\\{\tt <henning.bruhn@uni-ulm.de>}\\
Universit\"at Ulm, Germany
\end{minipage}
&
\begin{minipage}[t]{0.5\linewidth}
Richard Lang\\ 
{\tt <rlang@dim.uchile.cl>}

Maya Stein\\
{\tt <mstein@dim.uchile.cl>}\\
Universidad de Chile, Chile 
\end{minipage}
\end{tabular}

\end{document}